\def\sach{\ |\ }
\def\passe{\lhd}
\renewcommand\leq{\le}
\renewcommand\geq{\ge}
\def\d{\mbox{\,{\rm d}}}
\def\R{{\mathbb R}}
\def\z{{\mathbb Z}}
\def\Z{{\mathbb Z}}
\def\n{{\mathbb N}}
\def\P{{\mathbb P}}
\def\E{\mathbb{E}}
\def\1{{\mathbb 1}}
\def\F{{\mathcal{F}}}
\newtheorem{theoreme}{Theorem}
\newtheorem{conjecture}{Conjecture}[section]
\newtheorem{lemme}[conjecture]{Lemma}
\newtheorem{proposition}[conjecture]{Proposition}
\newtheorem{remarque}{Remark}
\newtheorem{definition}{Definition}
\newtheorem{notation}{Notation}
\newtheorem{assumption}{Assumption}
\def\@yproof[#1]{\@proof{ #1}}
\def\@proof#1{\begin{trivlist}\item[]{\em Proof#1.}}
\newenvironment{proof}{\@ifnextchar[{\@yproof}{\@proof{}
}}{~\end{trivlist}} \makeatother
\title{Sufficient conditions for the filtration of a stationary processes to be standard}
\author{Ga\"el Ceillier and Christophe Leuridan}
\date{\today}
\renewcommand{\le}{\leqslant}
\renewcommand{\ge}{\geqslant}
\begin{document}

\maketitle

%\tableofcontents

%\newpage

\begin{abstract}
Let $X$ be a stationary process with values in some $\sigma$-finite measured state space $(E,\mathcal{E},\pi)$, indexed by ${\mathbb Z}$. Call ${\mathcal F}^X$ its natural filtration. In~\cite{Ceillier-stationary}, sufficient conditions were given for ${\mathcal F}^X$ to be standard when $E$ is finite, and the proof used a coupling of all probabilities on the finite set $E$.

In this paper, we construct a coupling of all laws having a density with regard to $\pi$, which is much more involved. Then, we provide sufficient conditions for ${\mathcal F}^X$ to be standard, generalizing those in~\cite{Ceillier-stationary}.
\end{abstract}

\begin{flushleft}
{\it Mathematics Subject Classification}~: 60G10.\\
{\it Keywords}~: global couplings, filtrations of stationary processes, 
standard filtrations, generating parametrizations, 
influence of the distant past. 
\end{flushleft}

\section{Introduction}

\subsection{Setting}

In this paper we focus on the natural filtration of 
stationary processes indexed by the integer line $\z$ with values in 
any $\sigma$-finite measured space. By definition,  
the natural filtration of a process $X=(X_n)_{n\in\z}$ is the 
nondecreasing sequence $\F^X=(\F^X_n)_{n\in\z}$ defined by
$\F^X_n=\sigma(X^{\passe}_n)$, where $X^{\passe}_n=(X_k)_{k \le n}$ denotes the 
$\sigma-$field generated by the ``past of $X$ at time $n$''. Furthermore, 
we set 
$\F^X_\infty=\sigma(X_k ; k\in\z)$ and $\F^X_{-\infty}=\bigcap_{k\in\z}\F^X_k$.
All the $\sigma-$fields that we consider here are assumed to be complete.

Let $X=(X_n)_{n\in\z}$ be any process defined on some 
probability space. Under loose separability conditions, and up to an 
enlargement of the 
probability space, one may assume that $X$ fulfills a recursion as follows: 
for every $n \in \z$, $X_{n+1}$ is a function of $X^{\passe}_{n}$ 
(the ``past'' of $X$ at time $n$) and of a ``fresh'' random variable 
$U_{n+1}$, which brings
in some ``new'' randomness. When this property holds, namely if, for every
$n \in \z$, (i) $U_{n+1}$ is independent of $\F^{X,U}_n$, and (ii)
$X_{n+1}$ is measurable with respect to $\sigma(U_{n+1})\vee\F^X_n$, 
we say that the process $U=(U_n)_{n\in\z}$ is a \textit{ parametrization \/} 
of $X$ (or of $\F^X$). In particular, the process $U=(U_n)_{n\in\z}$ must 
be independent. Although
Schachermayer's definition~\cite{Schachermayer DFST} of parametrizations imposes that the $U_n$ are uniform on $[0,1]$, we do not keep that restriction. 

Any process indexed by the integer line $\z$ 
with values in some essentially separable space possesses a parametrization 
(see a proof in~\cite{Ceillier-thesis}, section 5.0.5).

Likewise, we say that a parametrization $U$ is a \textit{generating parametrization\/} of $X$, or that $U$ \textit{generates\/} $X$ if, for every $n \in \z$, $X_n$ is measurable with respect to $\F^U_n$. This is equivalent to the condition that
  $\F^X_n \subset \F^U_n$ for every $n \in \z$, a property which, from now on, we write as $\F^X \subset \F^U$.

As a matter of fact, generating parametrizations provide a 
stronger property which is \textit{immersion\/} of filtrations.
Recall that the filtration $\F^X$ is \textit{immersed\/} in the filtration $\F^U$ if $\F^X\subset\F^U$ and if, for every $n \in\z$, $\F^X_{n+1}$ and $\F^U_n$ are independent conditionally on $\F^X_n$. Roughly speaking, this means that $\F^U_n$ gives no further information on $X_{n+1}$ than $\F^X_n$ does.
Equivalently, $\F^X$ is immersed in $\F^U$ if every $\F^X$-martingale is an $\F^U$-martingale. The following easy fact holds (see a proof in~\cite{Ceillier-thesis}).

\begin{lemme}
\label{immersion}
If $U$ is a generating parametrization of $X$, then $\F^X$ is immersed in $\F^U$.
\end{lemme}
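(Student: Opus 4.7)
My plan is to verify the two requirements in the definition of immersion. The inclusion $\F^X\subset\F^U$ is part of the hypothesis that $U$ generates $X$, so only the conditional independence remains: for every $n\in\z$, I must show that $\F^X_{n+1}$ and $\F^U_n$ are independent given $\F^X_n$. Since $\F^X_n\subset\F^U_n$, the identity $\F^U_n\vee\F^X_n=\F^U_n$ lets me rephrase this as
\begin{equation*}
E[Y\sach \F^U_n]=E[Y\sach \F^X_n]
\end{equation*}
for every bounded $\F^X_{n+1}$-measurable random variable $Y$.

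By a monotone class argument applied to the generating $\pi$-system of random variables of the form $f(X^{\passe}_n)g(X_{n+1})$, it is enough to check this identity for $Y=g(X_{n+1})$, since the factor $f(X^{\passe}_n)$ is $\F^X_n$-measurable and pulls out of both conditional expectations. Clause~(ii) of the governing property, combined with the Doob-Dynkin lemma, produces a measurable map $\phi$ such that $X_{n+1}=\phi(U_{n+1},X^{\passe}_n)$, while clause~(i) ensures that $U_{n+1}$ is independent of $\F^{X,U}_n$, hence of each of the sub-$\sigma$-fields $\F^U_n$ and $\F^X_n$. A standard Fubini-type computation for independent variables then yields, on both sides,
\begin{equation*}
E[g(\phi(U_{n+1},X^{\passe}_n))\sach \F^{\bullet}_n]=\Phi(X^{\passe}_n),\qquad \Phi(x):=E[g(\phi(U_{n+1},x))],
\end{equation*}
where $\F^{\bullet}_n$ denotes either $\F^U_n$ or $\F^X_n$. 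The two conditional expectations therefore coincide, which is what was needed.

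I do not anticipate any genuine obstacle. Everything ultimately rests on the factorization $X_{n+1}=\phi(U_{n+1},X^{\passe}_n)$, whose existence is guaranteed by the Doob-Dynkin lemma in the separable setting used in the paper, together with the independence of $U_{n+1}$ from the relevant past $\sigma$-fields, which is exactly the governing hypothesis. The only step that requires a little care is the monotone class reduction, but this is entirely routine.
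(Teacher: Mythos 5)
Your argument is correct. The paper itself gives no proof of this lemma (it only cites \cite{ceillierthese}), so there is nothing internal to compare against; your route is the natural one. Two small technical points are worth flagging. First, the Doob--Dynkin step producing $X_{n+1}=\phi(U_{n+1},X^{\passe}_n)$ needs the state space to be standard Borel (or at least countably generated and separating), and since the $\sigma$-fields are completed the representation holds only almost surely --- both harmless here, and consistent with the separability the paper invokes for parametrizations, but worth stating. Second, this step can be avoided altogether: since $\F^X_{n+1}\subset\sigma(U_{n+1})\vee\F^X_n$ by clause~(ii), it suffices to prove the identity $E[Y\sach\F^U_n]=E[Y\sach\F^X_n]$ for all bounded $Y$ measurable with respect to the larger $\sigma$-field $\sigma(U_{n+1})\vee\F^X_n$, and the monotone class reduction then lands on $Y=h(U_{n+1})\mathbf{1}_A$ with $A\in\F^X_n$, for which both conditional expectations equal $\mathbf{1}_A\,E[h(U_{n+1})]$ directly from clause~(i); this variant requires no assumption on $E$. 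Either way the conclusion stands, and your freezing-lemma computation is sound as written.
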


Another notable property of filtrations is \textit{standardness\/}. Recall that $\F^X$ is \textit{standard\/}
if, modulo an enlargement of the probability space, one can immerse $\F^X$ in a filtration generated by an i.i.d.\ process with values in a separable space. Vershik introduced standardness in the context of ergodic theory.
Examples of non-standard filtrations include the filtrations of $[T,T^{-1}]$ transformations, introduced in~\cite{Heicklen - Hoffman}. Split-word processes, inspired by Vershik's $(r_n)$-adic sequences of decreasing partitions~\cite{Vershik 1995} and studied in~\cite{Smorodinsky 1998} and \cite{Laurent-thesis}, for instance, also provide non-standard filtrations.

Obviously, lemma~\ref{immersion} above implies that if $X$ has a generating parametrization with values in some essentially separable space, then $\F^X$ is standard. Whether the converse holds is not known.

Necessary and sufficient conditions for standardness include Vershik's
self-joining criterion and Tsirelson's notion of $I$-cosiness. Both
notions are discussed in~\cite{Emery - Schachermayer} and are based on 
conditions which are subtle and not easy to use nor to check in 
specific cases.

Our goal in this paper is to provide sufficient conditions for the 
existence of a generating parametrization of $\F^X$ 
(and therefore of standardness) that are
easier to use than the ones mentioned above. Each of our conditions involves a
quantification of the influence of the distant past
of the process on its present. We introduce them in the next section.

\subsection{Measuring the influence of the distant past}\label{intro-result}

From now on, $X$ denotes a stationary process indexed 
by the integer line $\z$ with values in some measurable space 
$(E,\mathcal{E})$. We fix a reference measure $\pi$ on $E$ which is 
$\sigma$-finite.
To introduce some quantities measuring the influence of the past of a 
process on its present, we need some notations.

\begin{notation}

(1) Slabs:
  For any sequence $(\xi_n)_{n\in\z}$ in $E^\z$, deterministic or random, and any integers $i\le
  j$, $\xi_{i:j}$ is the $(j-i+1)$-uple $(\xi_n)_{i\le n\le j}$ in
  $E^{j-i+1}$.
\\(2) Shifts:
If $k-i=\ell-j$, $\xi_{i:k}=\zeta_{j:\ell}$ means
  that $\xi_{i+n}=\zeta_{j+n}$ for every integer $n$ such that $0\le
  n\le k-i$.
\\Infinite case:  Let $E^{\passe}$ denote the space of sequences
  $(\xi_n)_{n\le-1}$.
For every $i$ in $\z$, a sequence $(\xi_n)_{n\le i}$ is also considered as an element of $E^{\passe}$ since, similarly to the finite case, one identifies $\xi^{\passe}_{i}=(\xi_n)_{n\le i}$ and $\zeta^{\passe}_{j}=(\zeta_n)_{n\le j}$ if $\xi_{i+n}=\zeta_{j+n}$ for every integer
  $n\le0$.
\\(3) Concatenation: for all $i \ge 0$, $j \ge 0$, $ x=(x_n)_{1\le n\le i}$ in $E^i$ and $y=(y_n)_{1\le n\le j}$ in $E^j$,
$xy$ denotes the concatenation of $x$ and $y$, defined as
$$
xy=(x_{1},\ldots,,x_{i},y_1,\ldots,y_j),\quad xy\in E^{i+j}.
$$
Infinite case: for all $j\ge 0$, $y=(y_n)_{1\le n\le j}$ in $E^j$ and
$x=(x_n)_{n\le-1}$ in $E^{\passe}$, $xy$ denotes the
concatenation of $x$ and $y$, defined as
$$
xy=(\ldots,x_{-2},x_{-1},y_1,\ldots,y_j),\quad xy\in E^{\passe}.
$$
\end{notation}

\begin{assumption}
 From now on, assume that one can choose a regular version of the conditional law of $X_0$ given $X_{-1}^{\passe}$ in such a way that for every $x \in E^\passe$, the law $\mathcal{L}(X_0|X_{-1}^{\passe}=x)$ has a density $f(\cdot|x)$ with respect to the reference $\sigma$-finite measure $\pi$ on $E$. Then, for every $n \ge 0$ and for every $x_{1:n} \in E^n$, the law $\mathcal{L}(X_0|X_{-n:-1}=x_{1:n})$ has a density $f(\cdot|x_{1:n})$ with respect to $\pi$.
\end{assumption}

When $E$ is countable and $\pi$ is the counting measure on $E$, the densities $f(\cdot|x)$ coincide with the functions $p(\cdot|x)$ used in~\cite{Ceillier-stationary} and defined by
$$
p(a|x) =\P (X_0 = a\sach X_{-n:-1}=x), \text{ for } x \in E^n \text{ and } a\in E,
$$
$$
\text{ and } \quad \quad p(a|x)=\P (X_0 = a \sach X^{\passe}_{-1}=x), \text{ for } x \in E^\passe \text{ and } a\in E.
$$

We now introduce three quantities $\gamma_n$, $\alpha_n$ and $\delta_n$ measuring the pointwise influence at distance $n$.

\begin{definition}
For every $n\ge0$, let
$$\begin{array}{l}
\gamma_n=1-\inf\left\{\displaystyle\frac{f(a|xz)}{f(a|yz)}\ ;\ a \in E,\ x\in E^{\passe},\ y \in E^{\passe},\ z\in E^n,\ f(a|yz)>0  \right\},\\~\\\alpha_n=1-\displaystyle\inf_{z \in E^n} \int_{E} \inf\left\{f(a|yz) ; y \in E^{\passe}\right\}\d\pi(a),\\~\\\delta_n=\sup\big\{ \| f(\cdot|xz)-f(\cdot|yz) \|\ ;\ x\in E^{\passe},\ y\in E^{\passe},\ z\in E^n\big\},
\end{array}$$
where, for every densities $f$ and $g$ on $E$,
$$\|f-g\| = \frac12\int_{E} |f(x)-g(x)|\d\pi(x) = 
\int_{E} \left[f(x)-g(x)\right]_+\d\pi(x).$$
\end{definition}

Note that the values $\gamma_n$, $\alpha_n$ and $\delta_n$ depend on the choice of the regular version of the conditional law of $X_0$ given $X_{-1}^{\passe}$. Applying the theorems below requires small influences, so one has to work with a ``good" version of the conditional law. Yet, replacing $\pi$ by an equivalent measure do not modify the quantities $\gamma_n$, $\alpha_n$ and $\delta_n$.

The sequences $(\gamma_n)_{n\geq 0}$, $(\alpha_n)_{n\geq 0}$ and 
$(\delta_n)_{n\geq 0}$ are non-increasing, $[0,1]$-valued, and
$\delta_n \le \min(\gamma_n,\alpha_n)$ for 
every $n \geq 0$ (see the proof in~\cite{Ceillier-stationary}, section 5).

For every $[0,1]$-valued sequence $(\varepsilon_n)_{n\geq 0}$, we consider the condition
\begin{align}
\tag{$\mathcal{H}(\varepsilon)$}
\sum_{k=0}^{+\infty} \prod_{n=0}^{k}(1-\varepsilon_n)=+\infty.
\end{align}
For instance, $\mathcal{H}(\gamma)$ and $\mathcal{H}(2\delta/(1+\delta))$ are respectively
$$
\sum_{k=0}^{+\infty} \prod_{n=0}^{k}(1-\gamma_n)=+\infty,
\quad\mbox{and}\quad
\sum_{k=0}^{+\infty} \prod_{n=0}^{k}\frac{1-\delta_n}{1+\delta_n}=+\infty.
$$

Observe that if two $[0,1]$-valued sequences $(\varepsilon_n)_{n\geq 0}$ and $(\zeta_n)_{n\geq 0}$ are such that $\varepsilon_n \leq \zeta_n$ for every $n\geq 0$, then $\mathcal{H}(\zeta)$ implies $\mathcal{H}(\varepsilon)$. Hence condition $\mathcal{H}(\varepsilon)$ asserts that $(\varepsilon_n)_{n\geq 0}$ is small in a certain way.

\subsection{Statement of the main results}

The definition of $(\gamma_n)_{n\geq 0}$ and the assumption $\mathcal{H}(\gamma)$ are both in \cite{Bressaud - Maass - Martinez - San Martin}.
The main result of \cite{Bressaud - Maass - Martinez - San Martin} is that when $E$ if of size $2$, then $\mathcal{H}(\gamma)$ implies that
$\F^X$ admits a generating parametrization.
This result is restricted by the following three conditions. First, the size of $E$ must be $2$. Second, one must control the \textit{ratios\/} of probabilities which define
$\gamma_n$. Third,
$\mathcal{H}(\gamma)$ implies that $\gamma_0 < 1$,
therefore one can show that $\mathcal{H}(\gamma)$ implies the existence of $c>0$ such that $f(a|x)\ge c$ for every $x$ in $E^{\passe}$ and $a$ in $E$.

Theorem 2 in~\cite{Ceillier-stationary} improves on this and gets rid of the first two restrictions. And Theorem~\ref{theo delta} below inproves on Theorem 2 in~\cite{Ceillier-stationary}.

\begin{theoreme}\label{theo delta}
  (1) If $E$ is finite and if $\mathcal{H}(2\delta/(1+\delta))$ holds, then $\F^X$ admits a generating parametrization.\\  (2) If the size of $E$ is $2$ and if $\mathcal{H}(\delta)$ holds, then $\F^X$ admits a generating parame\-trization.
\end{theoreme}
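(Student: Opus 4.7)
The argument follows a coupling-from-the-past scheme. The key ingredient, isolated as the technical heart of the paper, is a measurable map $\psi: E^\passe \times [0,1] \to E$ simultaneously coupling the whole family of conditional laws: for $U$ uniform on $[0,1]$, $\psi(x, U)$ has density $f(\cdot|x)$ for every $x \in E^\passe$, and for every $n \geq 0$, every $x, y \in E^\passe$ and every $z \in E^n$,
$$
\P\bigl(\psi(xz, U) \neq \psi(yz, U)\bigr) \leq \varepsilon_n,
$$
where $\varepsilon_n = 2\delta_n/(1+\delta_n)$ in case~(1) and $\varepsilon_n = \delta_n$ in case~(2). The sharper rate in case~(2) is available because on a two-point space the quantile coupling simultaneously realises the bilateral maximal coupling between every pair in the family, a feature that fails on larger finite alphabets and forces the extra factor $2/(1+\delta)$ in the general case.

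\textbf{Backward iteration.} Given i.i.d.\ uniforms $(U_n)_{n \in \Z}$, a starting past $x \in E^\passe$ and an integer $N \geq 0$, set $X^{x,N}_k = x_{k+N}$ for $k \leq -N$ and iterate $X^{x,N}_{k+1} = \psi\bigl((X^{x,N}_m)_{m \leq k}, U_{k+1}\bigr)$ for $k \geq -N$. For two starting pasts $x, y$, let $L_n$ denote the length of the largest common suffix of $(X^{x,N}_m)_{m \leq n}$ and $(X^{y,N}_m)_{m \leq n}$. Step~1 and stationarity imply that $L_n$ stochastically dominates a Markov chain on $\N$ with transitions $k \mapsto k+1$ with probability at least $1-\varepsilon_k$, and $k \mapsto 0$ otherwise. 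This chain is positive recurrent if and only if $\mathcal{H}(\varepsilon)$ fails; its stationary mass at $0$ is proportional to $1/\sum_{k \geq 0}\prod_{n=0}^{k-1}(1-\varepsilon_n)$. Hence under $\mathcal{H}(\varepsilon)$, starting from $L_{-N} = 0$, $\P(L_0 = 0) \to 0$ as $N \to \infty$, and so $\P(X^{x,N}_0 = X^{y,N}_0) \to 1$ uniformly in $x, y$. A coupling-from-the-past argument then upgrades this to almost-sure convergence of $X^{x,N}_0$ to a common limit $\tilde X_0$ independent of $x$.

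\textbf{Identification and conclusion.} Performing the same construction at every $n \in \Z$ yields a stationary, $\F^U$-measurable process $\tilde X$ satisfying $\tilde X_{n+1} = \psi\bigl(\tilde X^\passe_n, U_{n+1}\bigr)$. The marginal property of $\psi$ ensures that $\tilde X$ realises the prescribed conditional density of $X_0$ given its past, so $\tilde X$ has the same law as $X$. After enlarging the underlying probability space to carry $U$, the sequence $U$ is therefore a governing and generating process for $X$. The dominant obstacle is Step~1, the construction of $\psi$ with the stated quantitative bound; Steps~2 and~3 are comparatively routine once this coupling is in hand.
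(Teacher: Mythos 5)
Your skeleton is the same as the paper's: a single global coupling of all the conditional laws, fed into the Bressaud--Fern\'andez--Galves ``house of cards'' comparison, exactly as in the reference the paper points to when it says ``the same proof works''. Your Step 2 (domination of the agreement length $L_n$ by the chain $k\mapsto k+1$ w.p.\ $1-\varepsilon_k$, $k\mapsto 0$ otherwise, and the equivalence of $\mathcal{H}(\varepsilon)$ with non-positive-recurrence) and your Step 3 are correct in substance, modulo one point you should make explicit: your construction produces a copy $\tilde X$ of $X$ adapted to $\F^U$, whereas the theorem asks for a process governing and generating $X$ itself; one must either transfer by an isomorphism of filtered spaces or, as the paper does in its Section 2.2--2.3, build $U_n$ from $X_n$, the past and independent auxiliary randomness so that the recursion $X_{n+1}=\psi(X^{\passe}_n,U_{n+1})$ holds for the original $X$.

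The genuine gap is your Step 1 in case (1), which you yourself flag as ``the dominant obstacle'' and then do not carry out. The entire point of the hypothesis $\mathcal{H}(2\delta/(1+\delta))$ --- as opposed to the older $\mathcal{H}(2\delta)$ plus $\delta_0<1/2$ --- is the existence, on a finite (indeed countable) alphabet, of a global coupling with error probability at most $2\|p-q\|/(1+\|p-q\|)$ rather than $2\|p-q\|$; without producing it, part (1) is not proved. The paper supplies it in Proposition~\ref{global-countable}, proved in Section~\ref{S_aux}: take i.i.d.\ exponential variables $(\varepsilon_a)_{a\in E}$ and set $g(\varepsilon,p)=\mathop{\mathrm{argmin}}_a\,\varepsilon_a/p(a)$. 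Writing $C_a=\{g(\varepsilon,p)=g(\varepsilon,q)=a\}$ and $\lambda_{b/a}=\max\bigl(p(b)/p(a),q(b)/q(a)\bigr)$, one computes
$$
\P[C_a]=\Bigl(\sum_b \lambda_{b/a}\Bigr)^{-1}\ \ge\ \frac{\min(p(a),q(a))}{\sum_b\max(p(b),q(b))}=\frac{\min(p(a),q(a))}{1+\|p-q\|},
$$
and summing over $a$ gives $\P[g(\varepsilon,p)=g(\varepsilon,q)]\ge(1-\|p-q\|)/(1+\|p-q\|)$, i.e.\ the required bound. Note that this $g$ is a function of $p$ and of the whole family $(\varepsilon_a)_a$ (encodable in a single uniform variable), not of a one-dimensional quantile; no quantile-type construction achieves the $2\|p-q\|/(1+\|p-q\|)$ rate on alphabets of size $\ge 3$. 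Your case (2) is fine as stated, since on a two-point space the quantile coupling $g(U,p)=\mathbf{1}_{\{U\le p(1)\}}$ gives $\P[g(U,p)\ne g(U,q)]=\|p-q\|$ exactly, which yields the rate $\delta_n$ and hence the conclusion under $\mathcal{H}(\delta)$.
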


Theorem~\ref{theo delta} brings two improvements on the first result in~\cite{Ceillier-stationary}: the assumption $\mathcal{H}(2\delta)$ is replaced by $\mathcal{H}(2\delta/(1+\delta))$ and the extra hypothesis $\delta_0<1/2$ is not required anymore.

Another measure of influence, based on the quantities $\alpha_n$, is introduced and used in~\cite{Comets - Fernandez - Ferrari} (actually the notation there is $a_n=1-\alpha_n$). The authors show that if $\mathcal{H}(\alpha)$ holds, there exists a perfect sampling algorithm for the process $X$, a result which implies that $\F^X$ admits a generating parametrization.

Our Theorem~\ref{theo delta}, the result in \cite{Bressaud - Maass - Martinez - San Martin} and the exact sampling algorithm of
\cite{Comets - Fernandez - Ferrari} all require an upper bound of some pointwise
influence sequence. The second theorem in~\cite{Ceillier-stationary} uses a less restrictive
hypothesis based on some average influences $\eta_n$, defined below.

\begin{definition}
  For every $n\ge0$, let $\eta_n$ denote the average influence at distance $n$, defined as
  $$
    \eta_n= \E \big[\|f(\cdot|Y_{-n:-1})-f(\cdot|X^{\passe}_{-n-1}Y_{-n:-1})\|\big],
  $$
  where $Y$ is an independent copy of $X$, and call $\mathcal{H}'(\eta)$ the condition
  \begin{align}
    \tag{$\mathcal{H'}(\eta)$}
    \sum_{k=0}^{+\infty} \eta_k<+\infty.
  \end{align}
\end{definition}

As before, the definition of $\eta_n$ depends on the choice of the regular version of the conditional law of $X_0$ given $X_{-1}^{\passe}$, but it does not depend on the choice of the reference measure $\pi$. We recall the result from~\cite{Ceillier-stationary}.

\begin{theoreme}[Theorem 3 of~\cite{Ceillier-stationary}]\label{theo e}
  Assume that $E$ is finite and that for every $a$ in $E$,
  $p(a|X^{\passe}_{-1})>0$ almost surely (priming condition). 
Then, $\mathcal{H}'(\eta)$
  implies that $\F^X$ admits a generating parametrization.
\end{theoreme}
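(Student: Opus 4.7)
The plan is to construct an i.i.d.\ uniform driving sequence $U=(U_n)_{n\in\z}$ that serves as a governing generating process for $X$. The first step is to build a measurable coupling map $\Psi:E^{\passe}\times[0,1]\to E$ so that, for $U\sim\mathrm{Unif}[0,1]$, the random variable $\Psi(x,U)$ has law $p(\cdot|x)$ for every $x\in E^{\passe}$, and moreover $\P[\Psi(x,U)\ne\Psi(y,U)]=\|p(\cdot|x)-p(\cdot|y)\|$ (the maximal coupling). Finiteness of $E$ makes this explicit via the inverse-CDF construction after fixing an enumeration $E=\{a_1,\ldots,a_N\}$. Using the parametrization result of section 5.0.5 in~\cite{ceillierthese}, I may then realize $X$ on an enlarged probability space carrying such i.i.d.\ uniforms, with $X_{n+1}=\Psi(X_n^{\passe},U_{n+1})$ for every $n\in\z$. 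This already makes $U$ a governing process, and the task reduces to proving that each $X_n$ is $\F_n^U$-measurable.

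To establish this I would introduce reset chains indexed by $m\ge 1$: fix some past $y^{(m)}\in E^{\passe}$, set $X^{(m)}_k=y^{(m)}_k$ for $k\le -m-1$, and let $X^{(m)}_k=\Psi(X^{(m),\passe}_{k-1},U_k)$ for $k\ge -m$. Provided $y^{(m)}$ itself is $\F^U$-measurable, each $X^{(m)}_n$ is $\F_n^U$-measurable, so it suffices to show $X^{(m)}_n=X_n$ eventually almost surely. On the event $C^{(m)}_k=\{X^{(m)}_j=X_j\text{ for all }-m\le j\le k\}$, the probability of a first disagreement at step $k+1$ equals $\|p(\cdot|y^{(m)}X_{-m:k})-p(\cdot|X^{\passe}_{-m-1}X_{-m:k})\|$. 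Choosing $y^{(m)}$ to be distributed as an independent copy $Y^{\passe}_{-m-1}$ of the $X$-past and performing an $(X,Y)$-swap plus triangle inequality, the expectation of this total variation is bounded (up to a universal constant) by $\eta_{m+k+1}$. Summing over $k$ and invoking $\mathcal{H}'(\eta)$ yields a finite expected total number of decouplings; the priming condition $p(a|X^{\passe}_{-1})>0$ almost surely ensures that between decouplings the two chains re-couple with strictly positive probability at each step, so combining these two facts via Borel--Cantelli produces an almost-sure last decoupling time $T^{(m)}<\infty$, after which $X^{(m)}$ and $X$ coincide on $(T^{(m)},\infty)$.

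The main obstacle I foresee is the tension between two requirements: the $\eta$-bound needs a random past $y^{(m)}$ distributed like $X^{\passe}$, while $\F_n^U$-measurability of $X^{(m)}_n$ needs $y^{(m)}$ to be built from $U$-information alone. I expect the resolution to proceed via a Fubini argument: since the bound on expected decouplings holds on average over a random $y^{(m)}$, there exists at least one deterministic $y^{(m)}$ realising it, and a diagonal extraction across $m$ produces a fixed deterministic family of resets for which the almost-sure coupling argument still goes through. An alternative, possibly preferable on the enlarged probability space, is to augment $U$ with extra i.i.d.\ randomness encoding $y^{(m)}$, then invoke a zero-one argument showing that this auxiliary randomness cancels as $m\to\infty$. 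In either route the key technical inputs remain the summability $\sum\eta_k<\infty$ (stronger than $\eta_k\to 0$, as needed for Borel--Cantelli) and the priming condition (which converts eventual non-decoupling into eventual coincidence).
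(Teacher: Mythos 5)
Your overall strategy --- drive $X$ by i.i.d.\ innovations through a coupling map, restart an $\F^U$-measurable copy from further and further in the past, control decouplings by the summability of $(\eta_n)$, and use the priming condition plus Borel--Cantelli to conclude --- is the same as the paper's (which proves the more general Theorem~\ref{theo new} by exactly this scheme). But your very first step contains a genuine error. There is no map $\Psi$ such that $\P[\Psi(x,U)\ne\Psi(y,U)]=\|p(\cdot|x)-p(\cdot|y)\|$ for \emph{all pairs simultaneously} once $|E|\ge 3$, and the inverse-CDF construction in particular fails badly: take $p$ uniform on $\{a_1,\dots,a_{N-1}\}$ and $q$ uniform on $\{a_2,\dots,a_N\}$; then $\|p-q\|=1/(N-1)$ while the quantile maps disagree with probability $1$. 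A global coupling with disagreement probability controlled by $\|p-q\|$ is precisely the nontrivial ingredient the paper builds (the exponential-race coupling of Proposition~\ref{global-countable}, giving only the bound $2\|p-q\|/(1+\|p-q\|)$, which is enough since the factor $2$ does not affect summability). Without such a construction the estimate $\P[X'_n\ne X_n\mid\F^{X,U}_{n-1}]\le C\,\|f'_{n-1}-f_{n-1}\|$ on which everything rests is unavailable.

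The second gap is the initialization of the reset chain. Your proposed fix --- pick a deterministic past $y^{(m)}$ by a Fubini argument --- breaks the identification of the decoupling rate with $\eta$: the quantity $\eta_n$ is defined with the recent slab distributed as an \emph{independent copy of the stationary process}, and the argument in the paper (Lemma~\ref{lemme maj erreur partie 2}) uses crucially that the restarted segment has exactly the law of $X_{J}$ \emph{and} is independent of the true distant past $X^{\passe}_{s-1}$, because it is manufactured from the innovations $U_J$ alone. A chain restarted from a fixed $y^{(m)}$ has the wrong law, so the expected total variation you need to sum is not $\eta_{m+k+1}$, and bounding it would require controls you do not have. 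The paper's resolution is the priming lemma (Lemma~\ref{lemme amorce2}): from $U_J$ alone it produces $Z_J$ with $\mathcal{L}(Z_J)=\mathcal{L}(X_J)$ together with an independent event $H_J$ of positive probability on which $Z_J=X_J$ with high probability; this is the only place the priming condition enters (it guarantees the coalescence event has positive probability), not in a step-by-step ``re-coupling after decoupling'' as you suggest. Because the approximation is only good \emph{conditionally on} $H_J$, one then needs the independence of the events $H_{J_k}$ over disjoint blocks, the divergence of $\sum_k\P[H_{J_k}]$, and a subsequence extraction (Lemma~\ref{lem2.1}) to conclude; your sketch, which hopes for an unconditional convergence in probability of $X^{(m)}_0$ to $X_0$, skips over this and would not close. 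Your alternative of augmenting $U$ with auxiliary randomness encoding $y^{(m)}$ is also not viable as stated: the generating property requires $\F^X\subset\F^U$ for the \emph{given} governing sequence, and there is no zero-one argument making the auxiliary randomness disappear.
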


Note that the sequence $(\eta_n)_{n \geq 0}$ is $[0,1]$-valued. If $\eta_n<1$ for every $n \leq 0$, then $\mathcal{H}'(\eta)$ implies $\mathcal{H}(\eta)$.
Yet, since $\eta_n\le \delta_n$ for every $n \geq 0$, the condition $\mathcal{H}'(\eta)$ cannot be compared to the conditions $\mathcal{H}(\delta)$ and $\mathcal{H}(2\delta/(1+\delta))$.

The main result of the present paper is the theorem below which extends theorem 3 of ~\cite{Ceillier-stationary} to any $\sigma$-finite measured space $(E,\mathcal{E},\pi)$.

\begin{theoreme}\label{theo new}
Let $(E,\mathcal{E},\pi)$ be any $\sigma$-finite measured space. 
Assume that one has chosen a regular version of the conditional law of 
$X_0$ given $X_{-1}^{\passe}$ in such a way that for every $x \in E^\passe$, 
the law $\mathcal{L}(X_0|X_{-1}^{\passe}=x)$ has a density $f(\cdot|x)$ 
with respect to $\pi$ and that for $\pi$-almost every $a$ in $E$,
$f(a|X^{\passe}_{-1})>0$ almost surely (priming condition). 
Then, $\mathcal{H}'(\eta)$
implies that $\F^X$ admits a generating parametrization, hence is standard.
\end{theoreme}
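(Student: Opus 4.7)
The plan is to follow the blueprint of Theorem~\ref{theo e}, the main technical novelty being the construction of a jointly measurable \emph{universal coupling} of all $\pi$-densities on $(E,\mathcal{E},\pi)$, playing here the role that the total-variation coupling of measures on a finite set played in \cite{ceillierstationary}.

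\textbf{Universal coupling.} Let $\mathcal{D}$ be the set of $\pi$-densities, endowed with the $L^1(\pi)$ topology. I would first construct a jointly measurable map $\Phi\colon\mathcal{D}\times[0,1]\to E$ such that, for every $f\in\mathcal{D}$, the push-forward of Lebesgue measure on $[0,1]$ by $\Phi(f,\cdot)$ has density $f$ with respect to $\pi$, and such that, for every $f,g\in\mathcal{D}$ and $U\sim\mathrm{Uniform}[0,1]$,
$$
\P\bigl(\Phi(f,U)\neq\Phi(g,U)\bigr)\le \|f-g\|.
$$
A natural route is to fix a countable algebra $\mathcal{A}$ generating $\mathcal{E}$, to approximate every density by simple functions adapted to a nested sequence of refinements of $\mathcal{A}$, to apply an inverse-CDF map on each refinement level, and to glue the resulting couplings coherently, with extra care for unbounded densities and for sets of infinite $\pi$-measure.

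\textbf{Generating process by truncation and Borel--Cantelli.} Enlarge the probability space to carry i.i.d.\ Uniform$[0,1]$ variables $U=(U_n)_{n\in\z}$ and an independent stationary copy $Y$ of $X$. By the parametrization alluded to after Lemma~\ref{immersion}, together with the marginal property of $\Phi$, one may assume
$$
X_n=\Phi\bigl(f(\cdot\,|\,X^{\passe}_{n-1}),U_n\bigr)\quad\text{and}\quad U_{n+1}\perp \F^{X,U}_n\ \text{for every}\ n\in\z,
$$
which already makes $U$ govern $X$. Fix $n\in\z$ and, for every $k\ge 0$, define an approximation $\tilde X^{(k)}_n$ by running the same recursion from time $n-k$ to $n$ started from the independent seed $Y^{\passe}_{n-k-1}$ and using $U_{n-k},\dots,U_n$:
$$
\tilde X^{(k)}_j=\Phi\bigl(f(\cdot\,|\,Y^{\passe}_{n-k-1}\,\tilde X^{(k)}_{n-k:j-1}),U_j\bigr),\qquad j=n-k,\dots,n.
$$
The priming condition guarantees that the recursion is well defined $\P$-a.s., and by construction $\tilde X^{(k)}_n$ is measurable with respect to $\F^U_n\vee\sigma(Y)$. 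Iterating the coupling inequality, invoking stationarity, and inserting the intermediate density $f(\cdot\,|\,X_{n-k:n-1})$ via the triangle inequality yields an upper bound of order $\eta_k$ for $\P(\tilde X^{(k)}_n\neq X_n)$, summable under $\mathcal{H}'(\eta)$. Borel--Cantelli then gives $\tilde X^{(k)}_n=X_n$ eventually, a.s., so $X_n\in \F^U_n\vee\sigma(Y)$. Since $X$ is independent of $Y$, a Fubini argument lets us replace $Y$ by a deterministic realization and conclude $X_n\in\F^U_n$. This is the generating property of $U$, and Lemma~\ref{immersion} then yields the standardness of $\F^X$.

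\textbf{Main obstacle.} The substantive difficulty lies in the construction of $\Phi$. On a general $\sigma$-finite measured space there is no canonical linear order for an inverse-CDF construction; densities may be unbounded, and $\pi$ may have infinite total mass. Achieving the coupling inequality uniformly in $(f,g)\in\mathcal{D}^2$ while keeping $\Phi$ jointly measurable in $(f,u)$ strongly enough for the iterated recursion to produce $\F^U$-measurable approximations at every level $k$ is, as announced in the abstract, the technical heart of the paper and where the analysis departs substantively from the finite-$E$ case of \cite{ceillierstationary}.
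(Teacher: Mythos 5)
Your high-level architecture (a global coupling of all $\pi$-densities, a governing recursion $X_n=\Phi(f(\cdot|X^{\passe}_{n-1}),U_n)$, truncated restarts of the recursion, Borel--Cantelli) matches the paper's, but there is a fatal gap in the error analysis of the truncated recursion, and it is exactly the point where the priming condition must intervene. When you restart the recursion at time $n-k$ from the independent seed $Y^{\passe}_{n-k-1}$, the error does \emph{not} come out ``of order $\eta_k$''. On the event that $\tilde X^{(k)}$ and $X$ agree on $[n-k,j-1]$, the $j$-th step compares the densities $f(\cdot\,|\,Y^{\passe}_{n-k-1}w)$ and $f(\cdot\,|\,X^{\passe}_{n-k-1}w)$ with $w=X_{n-k:j-1}$ of length $m=j-(n-k)$; by the triangle inequality through $f(\cdot|w)$ this contributes about $2\eta_m$, so the union bound over the $k+1$ steps gives $\P(\tilde X^{(k)}_n\ne X_n)\lesssim\sum_{m=0}^{k}\eta_m$. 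Under $\mathcal{H}'(\eta)$ this converges to the constant $\sum_{m\ge0}\eta_m$, which need not be small and certainly does not tend to $0$ as $k\to\infty$: the early steps of the restarted recursion ($m=0,1,\dots$) carry an order-one probability of disagreement that never goes away. Consequently $\sum_k\P(\tilde X^{(k)}_n\ne X_n)=\infty$ and Borel--Cantelli cannot give $\tilde X^{(k)}_n=X_n$ eventually. Note also that your only invocation of the priming condition (``the recursion is well defined a.s.'') is not its actual role; a proof that never genuinely uses this hypothesis cannot be complete here.

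The paper's fix is the priming lemma (lemma~\ref{lemme amorce2}): using the priming condition $f(a|X^{\passe}_{-1})>0$ $\pi$-a.e.\ a.s., one builds for each block $J$ of length $\ell$ an event $H_J$ of \emph{positive} (not full) probability and a variable $Z_J$, both functions of $U_J$ alone, with $\mathcal{L}(Z_J)=\mathcal{L}(X_J)$, $H_J\perp Z_J$, and $\P[X_J\ne Z_J\,|\,H_J]\le\varepsilon$. Restarting the recursion from $Z_J$ instead of an independent past makes the accumulated error $\varepsilon+2\sum_{m\ge\ell}\eta_m$, which is small for $\ell$ large; since $\P[H_J]$ is only positive, one then needs independent blocks $J_k$, the divergence of $\sum_k\P[H_{J_k}]$, the second Borel--Cantelli lemma, and the extraction lemma~\ref{lem2.1}. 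Your proposal contains no analogue of any of this. Two secondary points: (i) your claimed global coupling inequality $\P(\Phi(f,U)\ne\Phi(g,U))\le\|f-g\|$, uniform over all pairs with a single randomness source, is stronger than what the paper achieves ($2\|f-g\|/(1+\|f-g\|)$ via a Poisson point process on $E\times\R^+\times\R^+$ of intensity $\pi\otimes\lambda\otimes\lambda$) and your inverse-CDF-on-refinements sketch does not establish it; a factor $2$ would suffice for the theorem, but you should not assert the constant-$1$ bound. (ii) The claim that ``one may assume'' $X_n=\Phi(f(\cdot|X^{\passe}_{n-1}),U_n)$ with $U_{n+1}\perp\F^{X,U}_n$ needs the explicit planting construction of section~2.2 of the paper (proposition~\ref{construction-governing}); it is not a consequence of the generic parametrization result.
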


In this paper, we call {\it priming condition} the condition ``for $\pi$-almost every $a$ in $E$, $f(a|X^{\passe}_{-1})>0$ almost surely''. 
This generalises the priming condition of~\cite{Ceillier-stationary}. Indeed,
when $E$ is finite and $\pi$ is the counting measure on $E$, the priming condition is equivalent to the assumption that for every $a$ in $E$, $p(a|X^{\passe}_{-1})>0$ almost surely, that is, the condition involved in theorem~\ref{theo e}. An equivalent form, which is actually the one used in the proof of theorem~\ref{theo e}, is
  $$\inf_{a \in E} p(a|X^{\passe}_{-1})>0 \text{ almost surely.}$$
But this condition cannot be satisfied anymore when $E$ is infinite. Therefore, the proof of theorem~\ref{theo new} requires new ideas to use the priming condition, see our priming lemma in section~\ref{priming-lemma} (lemma~\ref{lemme amorce2}).

%\subsection{Global coupling}\label{intro-global}

\subsection{Global couplings}

A key tool for the construction of our parametrization is a new global coupling. This global coupling, which applies when $E$ is infinite, is much more involved than the coupling introduced in~\cite{Ceillier-stationary} when $E$ is finite.

Recall that if $p$ and $q$ are two {\it fixed} probabilities on a countable space $E$, then for every random variables $Z_p$ and $Z_q$ with laws $p$ and $q$ defined on the same probability space,
$$
\P[Z_p\ne Z_q]\geq\|p-q\|,
$$
where $\|p-q\|$ is the
distance in total variation between $p$ and $q$, defined as
$$
\|p-q\|=\frac12\sum_{a \in E} |p(a)-q(a)|=\sum_{a \in E} \left[p(a)-q(a)\right]_+.
$$
Conversely, a standard construction in coupling theory provides some random variables $Z_p$ and $Z_q$ with laws $p$ and $q$ such that
$
\P[Z_p\ne Z_q]=\|p-q\|.
$

Couplings can be generalized to any set of laws, on a measurable space $(E,\mathcal{E})$.
\begin{definition}
Note $\mathcal{P}(E)$ the set of probabilities on the measurable space $(E,\mathcal{E})$. Let $\mathcal{A}$ be a subset of $\mathcal{P}(E)$. A global coupling for $\mathcal{A}$ is a random variable $U$ with values in some measurable space $(F,\mathcal{F})$ and a function $g$ from $F \times \mathcal{A}$ to $E$ such that for every $p \in \mathcal{A}$, $g(U,p)$ is a random variable of law $p$ on $E$.
\end{definition}

Good global couplings are such that for every $p$ and $q$ in $\mathcal{A}$, the probability
$\P[g(U,p)\ne g(U,q)]$ is small when $\|p-q\|$ is small.

When $E=\{0,1\}$, a classical coupling used in~\cite{Bressaud - Maass - Martinez - San Martin} is given by $g(U,p)=\mathbf{1}_{\{U\leq p(1)\}},$ where $U$ is a uniform random variable on $[0,1]$. This coupling satisfies the equality $\P[g(U,p)\ne g(U,q)]=\|p-q\|$, for every $p$ and $q$ in $\mathcal{P}(E)$.

This coupling has been extended to any finite set in~\cite{Ceillier-stationary}. Furthermore, the construction can still be extended to any countable set, as follows.
\begin{proposition}\label{global-countable}
Assume that $E$ is countable. Let $\varepsilon=(\varepsilon_a)_{a\in E}$ be an i.i.d. family of exponential random variables with parameter 1. Then, for any probability $p$ on $E$, there exists almost surely one and only one $a\in E$ such that
$$\frac{\varepsilon_a}{p(a)}=\inf_{b \in E} \frac{\varepsilon_b}{p(b)} \quad (\star),$$
with the convention that $\varepsilon_b/p(b)=+\infty$ if $p(b)=0$.
\begin{itemize}
\item Define $g(\varepsilon,p)$ (almost surely) as the only index $a$ such that $(\star)$ holds. Then the law of the random variable $g(\varepsilon,p)$ is $p$.
\item For every $p$, $q$ in $\mathcal{P}(E)$,
$$
\P[g(\varepsilon,p) \ne g(\varepsilon,q)] \leq 2\frac{\|p-q\|}{1+\|p-q\|} \leq  2\, \|p-q\|.
$$
\end{itemize}
\end{proposition}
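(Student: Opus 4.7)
The plan is to compute explicitly both the marginal law of $g(\varepsilon, p)$ and the joint agreement probability $\P[g(\varepsilon, p) = g(\varepsilon, q) = a]$, and then to derive the bound by a short algebraic inequality. First I would verify that, for any probability $p$ on $E$, the family $(\varepsilon_a/p(a))_{a:\, p(a)>0}$ is independent with continuous (non-atomic) distributions, so the infimum in $(\star)$ is almost surely attained at a unique index; the classical ``minimum of independent exponentials'' calculation then gives $\P[g(\varepsilon, p) = a] = p(a)$.

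For the joint probability, fix $a$ with $p(a), q(a) > 0$ and condition on $\varepsilon_a = t$: the event $\{g(\varepsilon, p) = g(\varepsilon, q) = a\}$ forces $\varepsilon_b > t \max(p(b)/p(a), q(b)/q(a))$ for every $b \ne a$. Independence of the $\varepsilon_b$, the integration $\int_0^\infty e^{-t} \prod_{b \ne a} e^{-t\max(\cdots)}\, dt$, and the identity $\max(x,y) = (x + y + |x-y|)/2$ give the closed form
\[
\P[g(\varepsilon, p) = g(\varepsilon, q) = a] \;=\; \frac{2\, p(a)\, q(a)}{p(a) + q(a) + D_a},\qquad D_a := \sum_b |p(b)\, q(a) - p(a)\, q(b)|.
\]
The countable sum inside the exponential converges since it is bounded by $(1-p(a))/p(a) + (1-q(a))/q(a)$, and the cases where $p(a)$ or $q(a)$ vanishes give $0$ on both sides in view of the marginal law.

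Finally, I would close the argument by a single algebraic inequality. Decomposing $p(b) q(a) - p(a) q(b) = (p(b) - q(b)) q(a) + q(b) (q(a) - p(a))$ and applying the triangle inequality after summing over $b$ yields $D_a \le |p(a) - q(a)| + 2\|p-q\|\, q(a)$, and symmetrically with $p(a)$ in place of $q(a)$. Writing $\epsilon = \|p - q\|$ and assuming without loss of generality $p(a) \le q(a)$, dividing by $p(a)$ shows this bound is exactly what is needed for
\[
\frac{2\, p(a)\, q(a)\, (1 + \epsilon)}{p(a) + q(a) + D_a} \;\ge\; \min(p(a), q(a)).
\]
Summing over $a$ and using $\sum_a \min(p(a), q(a)) = 1 - \epsilon$ yields $\sum_a \P[g(\varepsilon, p) = g(\varepsilon, q) = a] \ge (1-\epsilon)/(1+\epsilon)$, which rearranges into $\P[g(\varepsilon,p) \ne g(\varepsilon, q)] \le 2\epsilon/(1+\epsilon) \le 2\epsilon$, as required. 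The only delicate step is the integration and simplification in paragraph two; once the explicit form $2pq/(p+q+D_a)$ is in hand, the bound on $D_a$ is the natural thing to try and it closes the estimate precisely.
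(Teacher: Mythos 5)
Your proof is correct and follows essentially the same route as the paper: condition on $\varepsilon_a$ to get the exact formula $\P[g(\varepsilon,p)=g(\varepsilon,q)=a]=\bigl(\sum_b \max(p(b)/p(a),q(b)/q(a))\bigr)^{-1}$, bound each term below by $\min(p(a),q(a))/(1+\|p-q\|)$, and sum over $a$. Your detour through $D_a$ and the identity $\max(x,y)=(x+y+|x-y|)/2$ lands on exactly the same per-index inequality that the paper obtains in one line from $\max(p(b)/p(a),q(b)/q(a))\le \max(p(b),q(b))/\min(p(a),q(a))$ together with $\sum_b\max(p(b),q(b))=1+\|p-q\|$, so the difference is purely presentational.
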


The bound $2\|p-q\|/(1+\|p-q\|)$ improves on the bound $2\|p-q\|$ given in proposition 2.6 of~\cite{Ceillier-stationary} and explains that the assumption $\mathcal{H}(2\delta/(1+\delta))$
of theorem~\ref{theo delta} is weaker than the assumptions $\mathcal{H}(2\delta)$ and $\delta_0<1/2$ required in theorem 2 of~\cite{Ceillier-stationary}, although the same proof works.
The proof of proposition~\ref{global-countable} will be given in section~\ref{S_aux}.
%\begin{remarque}
%When $E=\{1,...,N\}$ the random variable $$U=(U_1,...,U_N) \text{ with } U_i=\varepsilon_i/(\sum_{j=1}^N \varepsilon_j)$$ is uniform on the simplex
%$$\sum_{j=1}^Nnn x_j=1,\quad \text{and for } 1\leq j \leq N, \quad 0\leq x_j \leq 1.$$
%Moreover $g(p,\varepsilon)$ is (almost surely) the only index verifying
%$$\frac{U_a}{p(a)}=\inf \frac{U_b}{p(b)}.$$
%\end{remarque}

%Thus proposition~\ref{global-countable} above generalizes lemma 2.5 of~\cite{**}.

%This proposition improves on proposition 2.6 since it provides the better bound $2\|p-q\|/(1+\|p-q\|)$.

We now briefly introduce a new global coupling that works on any measured $\sigma$-finite space $(E,\mathcal{E},\pi)$. The construction and the properties of the coupling $(g,U)$ will be detailed in section~\ref{S_global}.

Let $\mathcal{P}_\pi(E)$ be the subset of $\mathcal{P}(E)$ of all probability laws $p$ on $E$ admitting a density $f_p$ with respect to $\pi$. Let $\lambda$ be the Lebesgue measure on $\R^+$.
Let $U$ be a Poisson point process on $E\times \R^+\times\R^+$ of intensity $\pi\otimes\lambda\otimes\lambda$.

In the following, any element $(x,y,t)$ of $U$ is seen as a point $(x,y)$ in $E\times\R^+$ appearing at the time $t$.
Since $\lambda$ is diffuse, the third components of the elements of $U$ are almost surely all distinct.
For any density function $f$ on $(E,\mathcal{E},\pi)$, set $D_f=\{(x,y)\in E\times\R^+ ~:~y\leq f(x)\}$ and call $t_f$ the instant of appearance of a point under the graph of $f$. Then, almost surely, $t_f$ is positive and finite and the set $U\cap (D_f \times [0,t_f])$ is almost surely reduced to a single point $(x_f,y_f,t_f)$. The random variable $x_f$ has density $f$ with respect to $\pi$.

\begin{proposition}\label{prop_up_bound}
With the notations above, the formula $g(U,p)=x_{f_p}$ defines a global coupling for $\mathcal{P}_\pi(E)$ such that for every $p$ and $q$ in $\mathcal{P}_\pi(E)$,
$$\P[g(U,p)\ne g(U,q)]\leq \frac{2 \|p-q\|}{1+\|p-q\|}\leq 2 \|p-q\|.$$
\end{proposition}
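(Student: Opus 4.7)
The plan is to view the Poisson point process $U$ as a random ``rain'' of points $(x,y,t)$ falling on $E \times \R^+$ over time $t \in \R^+$, with spatial intensity $\pi \otimes \lambda$ per unit time. For any density $f$ on $(E,\mathcal{E},\pi)$, introduce the under-graph region $R_f = \{(x,y) \in E \times \R^+ : 0 \le y \le f(x)\}$, whose $(\pi \otimes \lambda)$-measure equals $\int_E f\,d\pi = 1$. Arrivals of $U$ into $R_f$ thus form a rate-$1$ Poisson process in time, so $t_f$ is almost surely positive and finite and $x_f$ is the $E$-coordinate of the unique first arrival. To confirm that $x_f$ has law $p$, I would use the standard fact that, conditionally on this first arrival, its $(E \times \R^+)$-location is uniformly distributed on $R_f$ with respect to $\pi \otimes \lambda$; the $E$-marginal of this uniform law has density $x \mapsto \int_{\R^+} \mathbf{1}_{[0,f(x)]}(y)\, dy = f(x)$.

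For the coupling bound, set $f = f_p$ and $h = f_q$. Using $\int f\,d\pi = \int h\,d\pi = 1$ together with the pointwise identities $\min(f,h) = (f+h-|f-h|)/2$ and $\max(f,h) = (f+h+|f-h|)/2$, one obtains the measure computations
$$(\pi \otimes \lambda)(R_f \cap R_h) = 1 - \|p-q\|, \qquad (\pi \otimes \lambda)(R_f \cup R_h) = 1 + \|p-q\|,$$
and hence $(\pi \otimes \lambda)(R_f \triangle R_h) = 2\|p-q\|$. The key observation is that if the first point of $U$ (in time order) to land in $R_f \cup R_h$ actually falls inside $R_f \cap R_h$, then it is simultaneously the earliest arrival in $R_f$ and in $R_h$, forcing $x_f = x_h$. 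Contrapositively, the event $\{g(U,p) \ne g(U,q)\} = \{x_f \ne x_h\}$ is contained in the event that the first arrival of $U$ in $R_f \cup R_h$ lies in the symmetric difference $R_f \triangle R_h$.

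To conclude, apply again the standard Poisson property: conditionally on there being an arrival in $R_f \cup R_h$, the location of the first such arrival is uniform on $R_f \cup R_h$ with respect to $\pi \otimes \lambda$. Therefore
$$\P[g(U,p) \ne g(U,q)] \le \frac{(\pi \otimes \lambda)(R_f \triangle R_h)}{(\pi \otimes \lambda)(R_f \cup R_h)} = \frac{2\|p-q\|}{1 + \|p-q\|},$$
and the weaker bound $2\|p-q\|$ follows from $1 + \|p-q\| \ge 1$. The main subtlety is measure-theoretic: since $\pi$ is only $\sigma$-finite, $U$ has infinitely many points and one cannot work with $U$ as a whole as if it were a finite Poisson process. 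However, for each pair of densities the restriction of $U$ to $(R_f \cup R_h) \times \R^+$ has finite total spatial intensity $1 + \|p-q\|$, so the ``first-arrival is uniform on the region'' principle applies unchanged. Beyond handling this restriction carefully, the argument is a clean geometric computation which works uniformly over all $\sigma$-finite measured spaces $(E,\mathcal{E},\pi)$.
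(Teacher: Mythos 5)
Your proposal is correct and follows essentially the same route as the paper: the paper's Proposition~\ref{erreur} computes $\P[t_{D_f}=t_{D_g}]=\mu(D_f\cap D_g)/\mu(D_f\cup D_g)$ via a race between the independent exponential arrival times on $D_f\cap D_g$ and $D_f\triangle D_g$, which is exactly your ``first arrival in the union is uniform, so it lands in the symmetric difference with probability $2\|p-q\|/(1+\|p-q\|)$'' argument, and the identification of the law of $x_{f}$ via the uniform location of the first point under the graph is likewise the paper's Lemma~\ref{lemme-coupl-loi}. The measure computations $\mu(D_f\cap D_g)=1-\|p-q\|$ and $\mu(D_f\cup D_g)=1+\|p-q\|$ and the handling of $\sigma$-finiteness by restricting to the finite-measure region are all as in the paper.
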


The paper is organised as follows : In section~\ref{S_global}, we detail the construction of the global coupling just mentionned, we prove proposition~\ref{prop_up_bound} and we construct a parametrization of the process $(X_n)_{n\in\z}$. Section~\ref{priming-lemma} is devoted to the proof of a key intermediate result which we call priming lemma, which uses the priming condition. In section~\ref{S_end of the proof}, we complete the proof of theorem~\ref{theo new}, showing that the parametrization $(U_n)_{n\in \z}$ constructed in section~\ref{S_global} generates the filtration $\F^X$.

\section{Construction of a parametrization with a glo-bal coupling}\label{S_global}
\subsection{Construction of a global coupling}

Let $(E,\mathcal{E},\pi)$ be a $\sigma-$finite measured space. Let $\lambda$ be the Lebesgue measure on $\R^+$. Set $\mu=\pi\otimes\lambda$ on $E\times \R^+$. Denote by $\mathcal{D}$ the set of countable subsets of $E\times \R^+\times\R^+$.

Let $U$ be a Poisson point process on $E\times \R^+\times\R^+$ of intensity $\pi\otimes\lambda\otimes\lambda$. By definition, $U$ is a $\mathcal{D}$-valued random variable such that for any pairwise disjoint sets $B_k$ in $\mathcal{E}$, of finite measure for $\mu\otimes\lambda=\pi\otimes\lambda\otimes\lambda$, the cardinalities $|U \cap B_k|$ are independent Poisson random variables with respective parameters $(\mu\otimes\lambda)(B_k)$.

In the following, any element $(x,y,t)$ of $U$ is seen as a point $(x,y)$ in $E\times\R^+$ appearing at time $t$.
Since $\lambda$ is diffuse, the third components of the elements of $U$ are almost surely all distinct.

\begin{notation}For any measurable subset $A$ of $E\times\R^+$, the time of appearance of a point in $A$ is
$$t_A(U)=\inf\left\{t\ge 0 ~:~ U\cap(A\times[0,t])\ne\emptyset \right\}.$$
\end{notation}

Let us recall some classical and useful properties of Poisson point processes. If the measure $\mu(A)$ is positive and finite, the random variable $t_A(U)$ has exponential law of parameter $\mu(A)$. Moreover the set $U\cap (A\times[0,t_A(U)])$ is almost surely reduced to a single point $(x_A(U),y_A(U),t_A(U))$. The couple $(x_A(U),y_A(U))$ thus defined is a random variable of law $\mu(\cdot|A)=\mu(\cdot\cap A)/\mu(A)$ which is independent from $t_A(U)$.

Moreover if the sets $B_k$ are pairwise disjoint measurable subsets of $E\times\R^+$, the random variables $(x_{B_k}(U),y_{B_k}(U),t_{B_k}(U))$ are independent.

The coordinates $x_A(U),y_A(U),t_A(U)$ vary as a function of the variable $U$. This notation will be used later on when we will consider several Poisson point processes, but, by abuse of notation it will be abbreviated to $x_A, y_A,t_A$ when there is no ambiguity on the Poisson point process.

\begin{lemme}\label{lemme1}
If the $n$ sets $A_k$ are measurable subsets of $E\times\R^+$, of finite positive measures for $\mu$, then almost surely
$$t_{A_1}=...=t_{A_n} \Longleftrightarrow t_{A_1 \cap...\cap A_n}=t_{A_1 \cup...\cup A_n}. $$
\end{lemme}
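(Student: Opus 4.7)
The plan is to exploit two simple facts. First, the map $A \mapsto t_A(U)$ is order-reversing for set inclusion (any point of $U$ lying in $B \times [0,t]$ with $B \subset C$ also lies in $C \times [0,t]$), which yields the deterministic sandwich
$$t_{A_1 \cup \cdots \cup A_n} \;\le\; t_{A_k} \;\le\; t_{A_1 \cap \cdots \cap A_n} \quad \text{for each } k = 1, \ldots, n.$$
Second, since $\lambda$ is diffuse, the third coordinates of the points of $U$ are almost surely pairwise distinct; and since each $\mu(A_k) \in (0, \infty)$, the infimum $t_{A_k}$ is almost surely finite and attained by a single point of $U$, by the properties of Poisson point processes recalled just above the lemma.

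The reverse implication $(\Leftarrow)$ is immediate from the sandwich: if the two extremes coincide, every $t_{A_k}$ is squeezed to that common value. For the direct implication $(\Rightarrow)$, I would work on the almost sure event on which the third coordinates of the points of $U$ are pairwise distinct and on which each $t_{A_k}$ is attained by a unique point of $U$. Assume $t_{A_1} = \cdots = t_{A_n} = t$. For each $k$, let $(x_k, y_k, t)$ be the unique point of $U$ in $A_k \times [0, t_{A_k}]$; pairwise distinctness of third coordinates forces all these points to coincide with a single $(x, y, t) \in U$. Therefore $(x, y) \in A_1 \cap \cdots \cap A_n$, which gives $t_{A_1 \cap \cdots \cap A_n} \le t$, and combined with the sandwich one concludes $t_{A_1 \cap \cdots \cap A_n} = t = t_{A_1 \cup \cdots \cup A_n}$.

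The only mild subtlety, which I would handle separately, is the degenerate case $\mu(A_1 \cap \cdots \cap A_n) = 0$: then almost surely no point of $U$ lies in the intersection, so $t_{A_1 \cap \cdots \cap A_n} = +\infty$ while $t_{A_1 \cup \cdots \cup A_n} < +\infty$, so the right-hand event is null; the argument for $(\Rightarrow)$ shows that the left-hand event would likewise force a point of $U$ in the null set $A_1 \cap \cdots \cap A_n$, so it is also null. Hence both sides of the equivalence are almost surely false and it holds vacuously. Beyond this edge case, the argument is pure bookkeeping on top of the two structural facts above, and I do not anticipate any serious obstacle.
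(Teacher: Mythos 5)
Your proof is correct and follows essentially the same route as the paper: the reverse implication from the monotonicity sandwich $t_{A_1\cup\cdots\cup A_n}\le t_{A_k}\le t_{A_1\cap\cdots\cap A_n}$, and the direct implication from the almost sure distinctness of the third coordinates, which forces the first points of all the $A_k$ to coincide and hence to lie in the intersection. Your separate treatment of the degenerate case $\mu(A_1\cap\cdots\cap A_n)=0$ is a small extra precaution the paper omits, but it does not change the argument.
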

{\it Proof}
The reverse implication follows from the inequalities
$$t_{A_1 \cup...\cup A_n}=\min\{t_{A_1},...,t_{A_n}\}\leq \max\{ t_{A_1},...,t_{A_n}\}\leq t_{A_1 \cap...\cap A_n}.$$
The direct implication follows from the fact that, almost surely, the third components of the elements of $U$ are all distinct. Therefore if $t_{A_1}=...=t_{A_n}$, then the point $(x_{A_k},y_{A_k},t_{A_k})$ does not depend on $k$, therefore $(x_{A_k},y_{A_k})$ belongs to $A_1 \cap...\cap A_n$.
This ends the proof. \hfill $\square$

We now study the dependance of $x_A$ with respect to $A$.

\begin{proposition}\label{erreur}
Let $A$ and $B$ be two measurable subsets of $E\times\R^+$ of measures finite and positive, one has that
$$\{x_A=x_B\} \supset \{t_A=t_B\} \text{ almost surely}$$
and $$\P[t_A=t_B]=\frac{\mu(A\cap B)}{\mu(A\cup B)}.$$
\end{proposition}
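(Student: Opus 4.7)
My plan is to treat the two assertions separately, relying on Lemma~\ref{lemme1} for the first one and on a three-way exponential race for the second.

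For the inclusion $\{x_A=x_B\}\supset\{t_A=t_B\}$ almost surely, I would simply reuse the argument already given in the proof of Lemma~\ref{lemme1}: almost surely, the third coordinates of the points of $U$ are pairwise distinct, so on the event $\{t_A=t_B\}$ the point $(x_A,y_A,t_A)$ and the point $(x_B,y_B,t_B)$ share the same time coordinate and must therefore coincide as elements of $U$. In particular their first coordinates agree, which gives $x_A=x_B$. (Equivalently, by the direct part of Lemma~\ref{lemme1} with $n=2$, the common point belongs to $A\cap B$.)

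For the probability formula, I would decompose $A\cup B$ into the three pairwise disjoint measurable pieces
\[
C_1=A\cap B,\qquad C_2=A\setminus B,\qquad C_3=B\setminus A.
\]
The basic independence property of the Poisson point process recalled just above Lemma~\ref{lemme1} implies that the first-appearance times $t_{C_1},t_{C_2},t_{C_3}$ are independent, and each $t_{C_i}$ is exponentially distributed with parameter $\mu(C_i)$ (with the convention that a parameter~$0$ yields an a.s.\ infinite value, so the argument goes through even when $\mu(C_i)=0$). Since $A=C_1\sqcup C_2$ and $B=C_1\sqcup C_3$, one has $t_A=\min(t_{C_1},t_{C_2})$ and $t_B=\min(t_{C_1},t_{C_3})$, while $t_{A\cup B}=\min(t_{C_1},t_{C_2},t_{C_3})$. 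By Lemma~\ref{lemme1}, the event $\{t_A=t_B\}$ agrees almost surely with $\{t_{A\cap B}=t_{A\cup B}\}$, which is exactly the event that $t_{C_1}$ is the smallest of the three independent exponentials. The standard exponential-race computation then yields
\[
\P[t_A=t_B]=\frac{\mu(C_1)}{\mu(C_1)+\mu(C_2)+\mu(C_3)}=\frac{\mu(A\cap B)}{\mu(A\cup B)}.
\]

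There is no genuine obstacle here: the proof is a direct consequence of Lemma~\ref{lemme1} together with the two elementary facts that (i) disjoint regions of a Poisson point process yield independent first-hitting times and (ii) the minimum of independent exponentials is attained by a given one with probability proportional to its rate. The only minor subtlety is bookkeeping when some of the $\mu(C_i)$ vanish, but this is absorbed by the exponential-parameter-zero convention and produces no difficulty.
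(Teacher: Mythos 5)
Your proof is correct and follows essentially the same route as the paper: both reduce $\{t_A=t_B\}$ via Lemma~\ref{lemme1} to the event that the first point of $U$ in $A\cup B$ falls in $A\cap B$, and then conclude by an exponential race between independent first-hitting times on disjoint regions. The only cosmetic difference is that you split $A\cup B$ into three pieces where the paper uses the two pieces $A\cap B$ and $A\triangle B$; the computation is identical.
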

{\it Proof}
Since the third components of the elements of $U$ are almost surely all distinct, one has $$t_A=t_B \Rightarrow x_A=x_B \text{ almost surely}.$$
By lemma~\ref{lemme1}, and the equality $t_{A \cup B}=\min \{t_{A\cap B},t_{A\triangle B}\}$,
\begin{eqnarray*}
    t_A=t_B &\Longleftrightarrow& t_{A\cap B}=t_{A \cup B}\\    &\Longleftrightarrow& t_{A\cap B}\leq t_{A\triangle B}.
\end{eqnarray*}
Since $t_{A\cap B}$ and $t_{A\triangle B}$ are independent exponential random variables with parameters $\mu(A\cap B)$ and $\mu(A\triangle B)$, one gets $$\P[t_A=t_B]=\mu(A\cap B)/\mu(A\cup B),$$
which completes the proof. \hfill $\square$

\begin{remarque}
Note that if $\pi$ is diffuse, then $\{x_A=x_B\} = \{t_A=t_B\}$ almost surely, thus $\P[x_A=x_B]=\mu(A\cap B)/\mu(A\cup B).$
\end{remarque}

Let us introduce some subsets to which we will apply proposition~\ref{erreur}.
\begin{notation}
For any measurable map $f : E \rightarrow \R^+$, denote by $D_f$ the part of $E\times\R^+$ located below the graph of $f$:
$$D_f=\{(x,y)\in E\times\R^+ ~:~y\leq f(x)\}.$$
\end{notation}
Then,$$\mu(D_f)=\int_E f(x)\d\pi (x).$$
If this measure is positive and finite, then we denote in abbreviated form :
$$(x_f,y_f,t_f)=(x_{D_f},y_{D_f},t_{D_f}).$$

%\eject
%\newpage

% \begin{figure}[h]
% \begin{center}
% \includegraphics[scale=0.5]{globalcoupling.eps}
% %\includegraphics[width=1\textwidth]{globalcoupling.eps}
% \caption{First point of $U$ below the graph of $f$.}
% \end{center}
% \label{Global coupling}
% \end{figure}

%\eject

Proposition~\ref{prop_up_bound} is a direct consequence of the next lemma.

\begin{lemme}\label{lemme-coupl-loi}
Let $\alpha>0$ and $f$ be a probability density function on $(E,\pi)$. Then
\begin{itemize}
\item the random variable $x_{\alpha f}$ has density $f$ with respect to $\pi$. Thus $(U,x)$ is a global coupling for the set of all laws on $E$ with a density with respect to $\pi$
\item for all probabilities $f$ and $g$ on $E$,
$$\P[x_{f}=x_{g}]\geq\P[t_{f}=t_{g}]=\frac{1-\|f-g\|}{1+\|f-g\|}.$$
\end{itemize}
\end{lemme}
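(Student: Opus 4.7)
The plan is to read both statements off the properties of Poisson point processes already established in the excerpt, after identifying the right sets to which Proposition~\ref{erreur} is applied.

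For the first item, I would begin by noting that $\mu(D_{\alpha f}) = \int_E \alpha f(x)\,\d\pi(x) = \alpha$, so $D_{\alpha f}$ has finite positive $\mu$-measure. Hence, by the classical property of Poisson point processes recalled just before Lemma~\ref{lemme1}, the point $(x_{\alpha f}, y_{\alpha f})$ is well defined almost surely and distributed according to $\mu(\,\cdot\,|\,D_{\alpha f}) = \mu(\,\cdot\,\cap D_{\alpha f})/\alpha$. To get the law of its first coordinate, I would compute, for every $A\in\mathcal{E}$,
$$
\P[x_{\alpha f}\in A]=\frac{1}{\alpha}\,\mu\bigl((A\times\R^+)\cap D_{\alpha f}\bigr)=\frac{1}{\alpha}\int_A \alpha f(x)\,\d\pi(x)=\int_A f(x)\,\d\pi(x),
$$
which identifies the density of $x_{\alpha f}$ as $f$. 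Applying this with $\alpha=1$ shows that $(U,x)$ is indeed a global coupling for $\mathcal{A}$.

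For the second item, the inclusion $\{t_f = t_g\}\subset\{x_f = x_g\}$ was already stated inside Proposition~\ref{erreur}, so the first inequality $\P[x_f=x_g]\ge\P[t_f=t_g]$ is free. For the equality, I would apply Proposition~\ref{erreur} with $A=D_f$ and $B=D_g$: it gives
$$
\P[t_f=t_g]=\frac{\mu(D_f\cap D_g)}{\mu(D_f\cup D_g)}.
$$
The key observation is the geometric identification $D_f\cap D_g=D_{f\wedge g}$ and $D_f\cup D_g=D_{f\vee g}$, which follows from the definition of $D_h$. Therefore $\mu(D_f\cap D_g)=\int_E (f\wedge g)\,\d\pi$ and $\mu(D_f\cup D_g)=\int_E (f\vee g)\,\d\pi$.

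It remains to re-express the last two integrals in terms of $\|f-g\|$. Writing $a=\int(f\wedge g)\,\d\pi$ and $b=\int(f\vee g)\,\d\pi$, I have on the one hand $a+b=\int f\,\d\pi+\int g\,\d\pi=2$, and on the other hand $b-a=\int|f-g|\,\d\pi=2\|f-g\|$, so that $a=1-\|f-g\|$ and $b=1+\|f-g\|$. Substituting yields $\P[t_f=t_g]=(1-\|f-g\|)/(1+\|f-g\|)$, which together with the first inequality concludes the proof.

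No step looks genuinely hard; the only point that deserves a little care is justifying that $D_f\cap D_g=D_{f\wedge g}$ and $D_f\cup D_g=D_{f\vee g}$ (trivial pointwise check), and that $\mu(D_{f\vee g})$ is positive and finite so that Proposition~\ref{erreur} applies, which is immediate since $\int(f\vee g)\,\d\pi\in[1,2]$.
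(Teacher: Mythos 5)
Your proposal is correct and follows exactly the route the paper intends: the first item from the fact that $(x_{\alpha f},y_{\alpha f})$ has law $\mu(\cdot\,|\,D_{\alpha f})$ (you supply the marginal computation the paper leaves implicit), and the second from Proposition~\ref{erreur} applied to $A=D_f$, $B=D_g$ together with the identities $\int(f\wedge g)\,\d\pi=1-\|f-g\|$ and $\int(f\vee g)\,\d\pi=1+\|f-g\|$. The paper's proof is only a two-sentence pointer to these same facts, so your write-up is simply a fully detailed version of it.
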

{\it Proof}
The first result follows from the fact that the law of $(x_{\alpha f}, y_{\alpha f})$ is $\mu(\cdot|D_{f})$. The second result follows from proposition~\ref{erreur}. \hfill $\square$

%\eject
%\newpage

% \begin{figure}[h]
% \begin{center}
% \includegraphics[width=.95\textwidth]{globalerreur.eps}
% \caption{$x_f=x_g$ if the first point of $U$ below the graph of $\max(f,g)$ is not in the grey area.}
% \end{center}
% \label{Global error}
% \end{figure}

\subsection{Parametrizing one random variable}\label{intro2}

Let $X$ be a random variable with values in $E$ with a density $f$ with respect to $\pi$.
Let $W$ be a Poisson point process on $E\times \R^+\times\R^+$ of intensity $\pi\otimes\lambda\otimes\lambda$, and $V$ be a random variable uniform on $[0,1]$, such that $X,V,W$ are independent. Let us modify $W$ in such a way that the first two components of the first point which appears in $D_f$ are $(X,Vf(X))$. We set
$$U=\Big[W\setminus \big\{\big(x_f(W),y_f(W),t_f(W)\big)\big\}\Big]\cup\big\{\big(X,Vf(X),t_f(W)\big)\big\}.$$

\begin{proposition}\label{construction-governing} The process $U$ thus defined is a Poisson point process on $E\times \R^+\times\R^+$ of intensity $\pi\otimes\lambda\otimes\lambda$, such that $x_f(U)=X$.
\end{proposition}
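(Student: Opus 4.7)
The plan is to establish the two assertions separately: that $U$ has the law of a Poisson point process with intensity $\pi\otimes\lambda\otimes\lambda$, and that $x_f(U)=X$.

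First, I would verify that the pair $(X,Vf(X))$ has the same distribution as the location $(x_f(W),y_f(W))$ of the earliest point of $W$ in $D_f\times\R^+$, namely the probability measure $\mu(\,\cdot\,\cap D_f)$ on $D_f$ (this is a probability measure since $f$ is a density, so $\mu(D_f)=1$). This follows from a short change of variables: for any measurable $A\subset E\times\R^+$, conditioning on $X$ and substituting $y=vf(X)$ in the integral over $v\in[0,1]$ yields
$$\P[(X,Vf(X))\in A]=\int_E f(x)\,\d\pi(x)\int_0^1 \mathbf{1}_A(x,vf(x))\,\d v=\int_A \mathbf{1}_{D_f}(x,y)\,\d(\pi\otimes\lambda)(x,y).$$

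Next, I would invoke the standard decomposition of $W$ into independent pieces: the restrictions $W_1=W\cap(D_f\times\R^+)$ and $W_2=W\cap(D_f^c\times\R^+)$ are independent PPPs, and $W_1$ itself decomposes as its earliest point $(x_f(W),y_f(W),t_f(W))$ together with a remainder which, conditionally on $t_f(W)=t$, is a PPP on $D_f\times(t,\infty)$ independent of that earliest point. Moreover the earliest point splits as an independent pair: the time $t_f(W)$ is exponential with parameter $\mu(D_f)=1$, and the location $(x_f(W),y_f(W))$ is uniform on $D_f$ with respect to $\mu$. Since $(X,Vf(X))$ has the same law as this location and, being a function of $(X,V)$, is independent of $W$ (hence independent of $t_f(W)$, of $W_2$, and of the remainder of $W_1$), replacing the location by $(X,Vf(X))$ preserves the joint law of the assembled pieces. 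Reassembling, $U$ has the same law as $W$, i.e.\ a PPP of intensity $\pi\otimes\lambda\otimes\lambda$.

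Finally, for the identity $x_f(U)=X$: the new point $(X,Vf(X),t_f(W))$ lies in $D_f\times\R^+$ because $0\le Vf(X)\le f(X)$, and its time $t_f(W)$ is by construction strictly smaller than the times of all the other points of $W$ situated in $D_f\times\R^+$, which are precisely the remaining points of $U$ in $D_f\times\R^+$. Hence $t_f(U)=t_f(W)$ and the associated location is $(X,Vf(X))$, so $x_f(U)=X$. The main obstacle is the independence claim in the second step: one must be careful that the replacement really disturbs only the location coordinate of the earliest point in $D_f\times\R^+$, leaving the time $t_f(W)$ and all other points untouched, so that the substitution of one uniform-on-$D_f$ variable for another preserves the full joint distribution.
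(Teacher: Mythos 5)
Your proof is correct and follows essentially the same route as the paper: split $W$ into the independent restrictions to $D_f\times\R^+$ and its complement, observe that the substitution only replaces the location of the earliest point of the first piece by $(X,Vf(X))$, which has the same law $\mu(\cdot\,|\,D_f)$ and the required independence, and conclude by reassembling. The only cosmetic difference is that the paper makes the independence bookkeeping explicit by writing $W_1$ as the full ordered sequence $(w_k,t_k)_{k\ge1}$ with $(w_k)$ i.i.d.\ of law $\mu(\cdot\,|\,D_f)$ independent of the times, whereas you use the equivalent earliest-point-plus-remainder decomposition.
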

{\it Proof}
One has directly  $x_f(U)=X$. Let us show that $U$ is a Poisson point process.

Denote by $\overline{D_f}$ the complementary of $D_f$ in $E\times\R^+$. Let us split $W$ into two independent Poisson point processes $W_1$ and $W_2$ by setting :
$$W_1=W \cap (D_f\times\R^+) \text{ and } W_2=W \cap (\overline{D_f}\times\R^+). $$

In a similar way, set
$$U_1=U \cap (D_f\times\R^+) \text{ and } U_2=U \cap (\overline{D_f}\times\R^+). $$
It suffices to show that $\mathcal{L}(U_1,U_2)=\mathcal{L}(W_1,W_2)$. Note that $U_2=W_2$ and $U_1$ is a function of $X,V,W_1$, therefore $U_1$ is independent from $U_2$ and $\mathcal{L}(U_2)=\mathcal{L}(W_2)$. Thus what remains to be proved is that $\mathcal{L}(U_1)=\mathcal{L}(W_1)$.

Let $(w_k,t_k)_{k \geq 1}$ be the sequence of the points of $W_1$ in $D_f \times \R^+$ ordered in such a way that $(t_k)_{k \geq 1}$ is an increasing sequence. Then $$U_1=\bigcup_{k \geq 1} \{(u_k,t_k)\}$$ with $u_1=(X,Vf(X))$ and $u_k=w_k$ for $k \geq2$. The random variables $(w_k)_{k \geq 1}$ form an i.i.d. sequence of law $\mu(\cdot|D_f)$, independent of $((t_k)_{k \geq 1},X,V)$.

A direct computation shows that the random variable $u_1=(X,Vf(X))$ has law $\mu(\cdot|D_f)$. Moreover, $u_1$ is independent of $((w_k)_{k \geq 2},(t_k)_{k\geq1})$
by independence of $X,V$ and $W$. Therefore $\mathcal{L}(U_1)=\mathcal{L}(W_1)$, which completes the proof. \hfill $\square$

\subsection{Construction of a parametrization}

Assume that $(X_n)_{n \in \z}$ is a stationary process with values in $E$ such that given $X^\passe_{-1}$, $X_0$ admits conditional densities $(f(\cdot|x))_{x \in E^\passe}$ with respect to $\pi$. One wants to apply the previous construction at each time $n \in \z$.

Recall that for every $n \in \Z^+$ and $x \in E^{\Z^-_*}$ the function $f(\cdot|x)$ is the density of the law $\mathcal{L}(X_0|X^{\passe}=x)$ with respect to $\pi$.

Let $(W_n)_{n\in\Z}$ be a sequence of independent Poisson point processes on $E\times \R^+\times\R^+$ of intensity $\pi\otimes\lambda\otimes\lambda$, and let $(V_n)_{n\in\Z}$ be a sequence of independent and uniform random variables on $[0,1]$, such that the sequences $(X_n)_{n \in \z},(V_n)_{n \in \z},(W_n)_{n \in \z}$ are independent.
Set
$$U_n=W_n\cup\{(X_n,V_nf_{n-1}(X_n),t_{f_{n-1}}(W_n))\}\setminus \{(x_{f_{n-1}}(W_n),y_{f_{n-1}}(W_n),t_{f_{n-1}}(W_n))\},$$
where $f_{n-1}=f(\cdot|X^{\passe}_{n-1})$.
Since $X$ is stationary, $f_{n-1}$ is the law of $X_n$ given the past $X^{\passe}_{n-1}$. Proposition~\ref{construction-governing} implies that for every $n \in \z$, $U_n$ is a Poisson point process independent from $\F^{X,V,W}_{n-1}$. Moreover, $X_n=x_{f_{n-1}}(U_n)$ and $U_n$ is a function of $W_n$, $V_n$ and $X^{\passe}_{n}$. Therefore we have the following result.
\begin{proposition}\label{governing sequence}
The process $(U_n)_{n \in \z}$ thus defined is an i.i.d. sequence of Poisson point processes on $E\times \R^+\times\R^+$ of intensity $\pi\otimes\lambda\otimes\lambda$, which is a parametrization of $\F^X$ with the recursion formula $X_n=x_{f_{n-1}}(U_n)$.
\end{proposition}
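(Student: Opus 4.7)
The plan is to reduce this multi-time statement to a repeated conditional application of Proposition~\ref{construction-governing} at each index $n$. For a fixed $n$, I would work conditionally on the $\sigma$-field $\F^{X,V,W}_{n-1}$. Since $f_{n-1}=f(\cdot|X^{\passe}_{n-1})$ is $\F^X_{n-1}$-measurable, hence measurable with respect to this larger $\sigma$-field, $f_{n-1}$ plays in the conditional picture the role of the deterministic density $f$ in Proposition~\ref{construction-governing}.

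The key verification is then that, conditional on $\F^{X,V,W}_{n-1}$, the triple $(X_n,V_n,W_n)$ is independent with the correct marginals: $X_n$ has density $f_{n-1}$, $V_n$ is uniform on $[0,1]$, and $W_n$ is a Poisson point process of intensity $\pi\otimes\lambda\otimes\lambda$. The conditional law of $X_n$ is given by the assumption on the conditional density of $X_0$ combined with stationarity, while the independence and marginals of $V_n,W_n$ come from the ambient mutual independence of the three sequences $(X_n)_{n\in\z}$, $(V_n)_{n\in\z}$ and $(W_n)_{n\in\z}$. Invoking Proposition~\ref{construction-governing} in this conditional form shows that the conditional law of $U_n$ given $\F^{X,V,W}_{n-1}$ is that of a Poisson point process of intensity $\pi\otimes\lambda\otimes\lambda$, and that the identity $X_n=x_{f_{n-1}}(U_n)$ holds almost surely. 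Since this conditional law is a fixed non-random probability, $U_n$ is in fact independent of $\F^{X,V,W}_{n-1}$, with the stated unconditional law.

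From here the remaining assertions fall out easily. For each $k\le n-1$, the variable $U_k$ is a function of $W_k$, $V_k$ and $X^{\passe}_k$, so $U_k$ is $\F^{X,V,W}_{n-1}$-measurable; hence $U_n$ is independent of $(U_k)_{k\le n-1}$, which, together with the fact that the law of $U_n$ does not depend on $n$, gives the i.i.d.\ property. The inclusion $\F^{X,U}_n\subset\F^{X,V,W}_n$ then yields the independence half of the governing property, while the identity $X_{n+1}=x_{f_n}(U_{n+1})$ combined with the $\F^X_n$-measurability of $f_n$ provides the measurability half. The only mildly delicate point I expect is the careful justification of the conditional independence of $(X_n,V_n,W_n)$ given $\F^{X,V,W}_{n-1}$ and of the fact that Proposition~\ref{construction-governing} transfers verbatim to the conditional setting, but both are routine consequences of the mutual independence of the ambient sequences and of the measurability of the family $(x,a)\mapsto f(a|x)$ inherited from the regular conditional law.
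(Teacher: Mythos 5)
Your proposal is correct and follows essentially the same route as the paper: the paper likewise obtains the result by applying Proposition~\ref{construction-governing} conditionally on $\F^{X,V,W}_{n-1}$ at each time $n$, noting that $U_n$ is then independent of $\F^{X,V,W}_{n-1}$, that $X_n=x_{f_{n-1}}(U_n)$, and that $U_n$ is a function of $W_n$, $V_n$ and $X^{\passe}_n$. Your write-up merely makes explicit the conditional-independence verifications that the paper leaves implicit.
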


\section{Priming lemma}\label{priming-lemma}

Fix an integer $\ell \ge 0$. The following lemma provides a random variable $Z_{1:\ell}$ and an event $H_\ell$ which both depend only on $U_{1:\ell}=(U_1,...,U_\ell)$ such that when $H_\ell$ occurs,
$X_{1:\ell}=Z_{1:\ell}$ with probability close to $1$.

\begin{lemme}[Priming lemma]\label{lemme amorce2}
    Assume that for \- $\pi$-almost every $a$ in $E$, we have $f(a|X^{\passe}_{-1})>0$ almost surely. For every $\varepsilon>0$ and $\ell \ge 0$, there exists a random variable $Z_{1:\ell}=(Z_1,...,Z_\ell)$ with values in $E^\ell$ and an event $H_\ell$, which are both functions of $U_{1:\ell}=(U_1,...,U_\ell)$ only, such that
    \begin{itemize}
        \item $\P[H_\ell]>0$,
        \item $\mathcal{L}(Z_{1:\ell})=\mathcal{L}(X_{1:\ell})$,
        \item $H_\ell$ is independent of $Z_{1:\ell}$,
        \item $\P[X_{1:\ell}\ne Z_{1:\ell}~|~H_\ell] \le \varepsilon.$
    \end{itemize}
\end{lemme}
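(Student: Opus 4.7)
The approach is to use the Poisson processes $U_{1:\ell}$ to simulate an independent ``virtual past'' $Y^\passe$ with law $\mathcal{L}(X^\passe_0)$, to build $Z_{1:\ell}$ by running the same recursion as $X_{1:\ell}$ but with $Y^\passe$ in place of $X^\passe_0$, and to choose $H_\ell$ so that on $H_\ell$ the two recursions produce the same output with probability at least $1-\varepsilon$.

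First, I would extract quantitative information from the priming condition via a Lusin/Egorov-type argument on the a.s.\ positive random density $f(\cdot|X^\passe_0)$. Using the $\sigma$-finiteness of $\pi$, for any $\eta>0$ one finds an event $B\subset E^\passe$ with $\P[X^\passe_0\in B]>0$, a measurable $E_0\subset E$ with $\pi(E_0)<\infty$, and constants $0<c<N$ such that every $\omega\in B$ satisfies $c\le f(a|\omega)\le N$ for $\pi$-a.e.\ $a\in E_0$ and $\int_{E\setminus E_0}f(a|\omega)\,d\pi\le\eta$. By stationarity and iteration along the $\ell$ first steps, these bounds can be arranged to hold jointly for $f(\cdot|X^\passe_{n-1})$, $n\le\ell$, on an event of positive probability.

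Second, each $U_n$, being a Poisson point process on the $\sigma$-finite space $E\times\R^+\times\R^+$, carries far more randomness than is needed to encode $X_n$. Partitioning $E\times\R^+$ and splitting each $U_n$ accordingly yields several independent Poisson sub-processes. I use a sub-process supported on a ``high-altitude'' slab (of infinite $\mu$-measure) to produce, via a measurable Borel isomorphism, a random variable $Y^\passe$ with law $\mathcal{L}(X^\passe_0)$ that is independent of the remaining sub-processes; another sub-process, supported on the ``action region'' $E_0\times[0,N]$, is used to drive the recursion $Z_n=x_{f(\cdot|Y^\passe Z_{1:n-1})}(U_n)$. By Proposition~\ref{prop_up_bound} and the tower rule, $(Y^\passe,Z_{1:\ell})\stackrel{d}{=}(X^\passe_0,X_{1:\ell})$, hence $\mathcal{L}(Z_{1:\ell})=\mathcal{L}(X_{1:\ell})$.

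Third, I would define $H_\ell$ as the event that, for each $n\le\ell$, the first point of $U_n$ in $E_0\times[0,N]$ has $y$-coordinate at most $c$. On $H_\ell\cap\{X^\passe_0\in B,\,Y^\passe\in B\}$, which has positive probability by the independence of $X^\passe_0$ and $Y^\passe$, both $f(\cdot|X^\passe_{n-1})$ and $f(\cdot|Y^\passe Z_{1:n-1})$ dominate $c\mathbf{1}_{E_0}$, so the low first point lies below both densities; the mass of size $\le\eta$ that either density places outside $E_0$ contributes only an $O(\ell\eta)$ correction through a union bound, which is made $\le\varepsilon$ by choosing $\eta$ small. An explicit computation then gives $\P[H_\ell]=(c/N)^\ell>0$.

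The main obstacle I anticipate is verifying the independence $H_\ell\perp Z_{1:\ell}$: the first point of $U_n$ in the action region enters both the definition of $H_\ell$ (through its $y$-coordinate) and the definition of $Z_n$ (through the recursion). I expect this to be handled by exploiting the independence of the $x$- and $y$-coordinates of the first point of a Poisson process of product intensity $\pi\otimes\lambda\otimes\lambda$, together with the independence of exponential arrival times across disjoint sub-regions; the precise decomposition may require a more elaborate three-way partition, with the ``observation'' sub-process chosen genuinely disjoint from those driving the $Y^\passe$ extraction and the $Z_n$ recursion, and this is likely the technically most delicate step.
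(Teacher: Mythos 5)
Your overall architecture (simulate a surrogate copy of $X_{1:\ell}$ from $U_{1:\ell}$, then certify agreement with $X_{1:\ell}$ on a positive-probability event) is the right one, but the construction of $H_\ell$ has a genuine gap that your own ``main obstacle'' paragraph does not resolve. Take $H'_n=\{$the first point of $U_n$ in $E_0\times[0,N]$ has ordinate $\le c\}$ and $Z_n=x_{D_{g_n}}(U_n)$ with $g_n=f(\cdot|Y^\passe Z_{1:n-1})$. If $c\le g_n\le N$ on $E_0$, then on $H'_n$ the first point of $E_0\times[0,N]$ lies in $D_{g_n}$ and hence \emph{is} the point defining $Z_n$; conditionally on $H'_n$ its abscissa is distributed as $\pi(\cdot\cap E_0)/\pi(E_0)$, whereas unconditionally $Z_n$ has density $g_n$. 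So unless $g_n$ is constant on $E_0$, conditioning on $H'_n$ both breaks the independence and corrupts the identity $\mathcal{L}(Z_{1:\ell})=\mathcal{L}(X_{1:\ell})$: the independence of abscissa and ordinate that you invoke holds for the first point of a \emph{product} region, not of $D_{g_n}$. Choosing the observation sub-process genuinely disjoint from the one driving $Z_n$ restores independence but destroys the coupling, since such an $H_\ell$ then carries no information about whether the first points under $f(\cdot|X^{\passe}_{n-1})$ and under $g_n$ coincide. A second, independent gap: your error estimate is only established on $H_\ell\cap\{X^{\passe}_0\in B\}$, but $\{X^{\passe}_0\in B\}$ is not $U_{1:\ell}$-measurable, so it cannot be absorbed into $H_\ell$, and the Egorov-type argument only yields $\P[X^{\passe}_0\in B]>0$, not close to $1$; on the complement you control nothing, so $\P[X_{1:\ell}\ne Z_{1:\ell}\mid H_\ell]\le\varepsilon$ does not follow.

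The paper's inductive step resolves exactly these two points, and its mechanism is the idea your plan is missing. One does not observe a fixed window $E_0\times[0,N]$: one sets $Z_{\ell+1}=x_{m^{-1}f(\cdot|Z_{1:\ell})}(U_{\ell+1})$ --- scaling the density by $m^{-1}$ leaves the law of the abscissa unchanged but, by the priming condition and dominated convergence, pushes its graph essentially below $f(\cdot|X^{\passe}_\ell)$ in expectation conditionally on $H_\ell$ --- and takes $H'=\{t_{m^{-1}f(\cdot|Z_{1:\ell})}\le t_{M}\}$, where $M$ is a \emph{$Z_{1:\ell}$-measurable} level calibrated so that $\int_E\sup\big(m^{-1}f(a|Z_{1:\ell}),M\big)\d\pi(a)=n+1$ exactly. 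Then $\P[H'\mid\sigma(Z_{1:\ell},H_\ell)]=m^{-1}/(n+1)$ is a deterministic constant, which simultaneously gives $\P[H_{\ell+1}]>0$ and the independence of $H_{\ell+1}$ from $Z_{1:\ell+1}$; and since $Z_{\ell+1}$ is defined as the first point under $m^{-1}f(\cdot|Z_{1:\ell})$ whether or not $H'$ occurs, conditioning on $H'$ does not distort its law. All quantitative input enters as conditional expectations given $H_\ell$ (no uniform bounds, no good set of pasts, no auxiliary virtual past $Y^\passe$ --- the finite-past densities $f(\cdot|z_{1:\ell})$ suffice for the law requirement), which is how the priming condition is exploited without pretending the densities are bounded away from $0$ and $\infty$ on an event of large probability.
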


Note that the hypothesis and the conclusions of lemma~\ref{lemme amorce2} are not modified if one replace $\pi$ by an equivalent measure. Thus in the proof, one may assume without loss of generality that $\pi$ is a probability measure.

The proof proceeds by induction. For $\ell=0$ the result is trivial with $H_0=\Omega$ and $Z_0$ equal to the empty word. The inductive step from $\ell$ to $\ell+1$ uses the next two lemmas. These lemmas show that with a probability close to $1$, most of the graph of $f(\cdot|X^\passe_\ell)$ is between $m^{-1}f(\cdot|Z_{1:\ell})$ and $n$ for suitable constants $m$ and $n$.

\begin{lemme}\label{lemme_amorce_1} Let $\ell \ge 0$ be an integer, $H$ an event of positive probability, and $Z$ be a random variable with values in $E^\ell$. For every $\varepsilon>0$, there exists a real number $m\ge1$ such that
$$\E\Big[\int_E \big[f(a|Z)-m f(a|X^\passe_\ell)\big]_+\d\pi(a)~\Big|~H\Big]\le \varepsilon.$$
\end{lemme}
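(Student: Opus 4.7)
My plan is to view the inner integral as a function of $\omega$ that decreases to $0$ pointwise almost surely as $m \to \infty$, and then apply dominated convergence twice (once in the $\pi$-integral, once in the conditional expectation) to conclude. The priming condition is the only input that gives the pointwise convergence; the rest is soft.

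More concretely, observe first that the integrand is controlled: for every $m \ge 1$ and every realization,
$$0 \le \bigl[f(a|Z_{1:\ell}) - m\,f(a|X^\passe_\ell)\bigr]_+ \le f(a|Z_{1:\ell}),$$
and since $f(\cdot|Z_{1:\ell})$ is a probability density with respect to $\pi$, its integral is $1$. Consequently the random variable
$$I_m := \int_E \bigl[f(a|Z_{1:\ell}) - m\,f(a|X^\passe_\ell)\bigr]_+ \d\pi(a)$$
takes values in $[0,1]$, and the sequence $(I_m)_{m \ge 1}$ is nonincreasing in $m$.

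Next I would use the priming condition together with stationarity: since $f(a|X^\passe_{-1})>0$ for $\pi$-almost every $a$ almost surely, the same holds for $f(a|X^\passe_\ell)$ in place of $f(a|X^\passe_{-1})$. Therefore, on an event of full measure, the function $a \mapsto f(a|X^\passe_\ell)$ is strictly positive $\pi$-almost everywhere, so that for $\pi$-almost every $a$,
$$\bigl[f(a|Z_{1:\ell}) - m\,f(a|X^\passe_\ell)\bigr]_+ \xrightarrow[m \to \infty]{} 0.$$
Applying the dominated convergence theorem for the measure $\pi$, with the $\pi$-integrable dominant $f(\cdot|Z_{1:\ell})$, gives $I_m \to 0$ almost surely.

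Finally, since $0 \le I_m \le 1$ and $\P[H] > 0$, the dominated convergence theorem for the conditional expectation given $H$ yields
$$\E[I_m \sach H] \xrightarrow[m \to \infty]{} 0,$$
and picking $m$ large enough to make this quantity at most $\varepsilon$ (and, if necessary, replacing it by $\max(m,1)$) proves the lemma. There is no real obstacle here: the only nontrivial ingredient is the translation of the priming condition from time $-1$ to time $\ell$ via stationarity, and then the argument is just two successive applications of dominated convergence.
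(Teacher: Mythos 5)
Your proof is correct and follows essentially the same route as the paper's: translate the priming condition to time $\ell$ by stationarity, obtain pointwise convergence of $\big[f(a|Z_{1:\ell})-m f(a|X^\passe_\ell)\big]_+$ to $0$, and conclude by dominated convergence, handling the conditioning on $H$ at the end (the paper uses $\P[\cdot|H]\le\P[H]^{-1}\P[\cdot]$ where you apply dominated convergence a second time, which is the same point). No gaps.
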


\begin{proof} The priming condition and the stationarity of $X$ ensure that, 
for $\pi$-almost every $a \in E$, $f(a|X^{\passe}_{\ell})>0$ almost surely, so
$$\big[f(a|Z)-m f(a|X^\passe_\ell)\big]_+ \to 0 \text{ a.s. as }m \to +\infty.$$
Hence, by dominated convergence
$$\E\Big[\int_E \big[f(a|Z)-m\ f(a|X^\passe_\ell)\big]_+\d\pi(a)\Big] \to 0 \text{ as }m \to +\infty.$$
The same result holds with $\E[\cdot|H]$ instead of $\E$ since $\P[\cdot|H]\le\P[H]^{-1}\P[\cdot]$,
which concludes the proof.
\hfill $\square$
\end{proof}

\begin{lemme}\label{lemme_amorce_2} Let $\ell \ge 0$ be an integer, $m \ge 1$ a real number, $H$ an event of positive probability and $Z$ be a random variable with values in $E^\ell$. For every $\varepsilon>0$, there exists a real number $n\ge1$ such that
$$\E\Big[\int_E \big[f(a|X^\passe_\ell)-n\big]_+\d\pi(a)~\Big|~H\Big]\le \varepsilon.$$
For such a real number $n$, there exists a random variable $M$, with values in $[n,n+1]$, which is a function of $Z$ only, such that
$$\int_E \sup(m^{-1} f(x|Z) , M)\d\pi(x)=n+1.$$
Since $M \ge n$, one has $\big[f(a|X^\passe_\ell)-M\big]_+ \le \big[f(a|X^\passe_\ell)-n\big]_+$, so
$$\E\Big[\int_E \big[f(a|X^\passe_\ell)-M\big]_+\d\pi(a)~\Big|~H\Big]\le \varepsilon.$$
\end{lemme}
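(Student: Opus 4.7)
My plan is to prove the two assertions separately, and to assume throughout, exactly as in the proof of lemma~\ref{lemme amorce2}, that $\pi$ has been replaced by an equivalent probability measure, so that $\pi(E)=1$. Under this normalisation the identity $\int_E \sup(m^{-1}f(x|Z),M)\d\pi(x)=n+1$ can be rewritten, via the pointwise identity $\max(a,b)=b+[a-b]_+$, as a one-variable equation in $M$, which is what makes the construction of $M$ transparent.

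For the first statement, I would argue by dominated convergence. Since $f(\cdot|X^\passe_\ell)$ is a probability density with respect to $\pi$, one has $[f(a|X^\passe_\ell)-n]_+\le f(a|X^\passe_\ell)$ almost surely, and this tends pointwise (in $a$) to $0$ as $n\to\infty$. Hence $\int_E[f(a|X^\passe_\ell)-n]_+\d\pi(a)\to 0$ almost surely. This integral is itself bounded by the constant $1$, so a second dominated convergence under $\P[\cdot\,|\,H]$ (or the brute estimate $\P[\cdot\,|\,H]\le\P[H]^{-1}\P[\cdot]$, as in the proof of lemma~\ref{lemme_amorce_1}) produces an $n$, which I may take $\ge 1$, making the conditional expectation $\le\varepsilon$.

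For the second statement, the central object is, for each $z\in E^\ell$, the function
$$\phi_z(t)=\int_E \max\bigl(m^{-1}f(x|z),\,t\bigr)\d\pi(x)=t+\int_E\bigl[m^{-1}f(x|z)-t\bigr]_+\d\pi(x),\quad t\ge 0.$$
The function $\phi_z$ is continuous and non-decreasing, with difference quotients in $[0,1]$. From $\int_E m^{-1}f(x|z)\d\pi(x)=m^{-1}\le 1$ and $m\ge 1$ one gets $\phi_z(n)\le n+m^{-1}\le n+1$, and trivially $\phi_z(n+1)\ge n+1$. The intermediate value theorem then yields some $M(z)\in[n,n+1]$ with $\phi_z(M(z))=n+1$. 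Setting $M(z)=\inf\{t\ge n:\phi_z(t)\ge n+1\}$ ensures that $z\mapsto M(z)$ is measurable, since $\{z:M(z)\le s\}=\{z:\phi_z(s)\ge n+1\}$ and $z\mapsto\phi_z(s)$ is measurable by Fubini applied to $(z,x)\mapsto m^{-1}f(x|z)$. Substituting $Z$ for $z$ yields the desired random variable $M$, which is a function of $Z$ only.

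Finally, the inequality $M\ge n$ gives the pointwise bound $[f(a|X^\passe_\ell)-M]_+\le [f(a|X^\passe_\ell)-n]_+$, so the second conditional expectation inequality follows immediately from the first. The only step that is slightly more than routine bookkeeping is the measurability of $M$ as a function of $Z$; this is the reason I prefer to define $M$ by the explicit infimum formula rather than invoking the intermediate value theorem anonymously.
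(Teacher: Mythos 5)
Your proof is correct and follows essentially the same route as the paper: dominated convergence to choose $n$, an intermediate-value argument on $\phi_z(t)=\int_E\max(m^{-1}f(x|z),t)\d\pi(x)$ together with the bounds $\phi_z(n)\le n+m^{-1}\le n+1\le\phi_z(n+1)$ to define $M$, and the monotonicity bound $[f-M]_+\le[f-n]_+$ for the final inequality. If anything you are slightly more careful than the paper, whose displayed $\phi$ is written with $f(x|X^\passe_\ell)$ in place of $f(x|Z)$ (an evident slip, since $M$ must be a function of $Z$ only, as the later use of the lemma requires), and which leaves the measurability of $z\mapsto M(z)$ implicit.
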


\begin{proof} The same method as in the proof of lemma~\ref{lemme_amorce_1} provides a real number $n\ge 1$ such that
$$\E\Big[\int_E \big[f(a|X^\passe_\ell)-n\big]_+\d\pi(a)~\Big|~H\Big]\le \varepsilon.$$
Define a random application $\phi$ from $\R^+$ to $\R^+$  by
$$\phi(s)= \int_E \sup(m^{-1} f(x|X^\passe_\ell) , s)\d\pi(x).$$
 Since for every $s \in \R^+$, $s\le\sup(m^{-1} f(x|X^\passe_\ell),s)\le m^{-1} f(x|X^\passe_\ell)+s$, one has $s\le\phi(s)\le m^{-1}+s\le 1+s$ (recall that $\pi$ is assumed to be a probability). Hence $\phi(n)\le n+1\le\phi(n+1)$. Then, since $\phi$ is continuous, the random variable
$$M=\inf\{s\in \R^+:\,\phi(s)= n+1\}$$ is well defined and satisfies the
conclusion of the lemma.
\hfill $\square$
\end{proof}

We can now prove the induction step of the proof of lemma~\ref{lemme amorce2}.

\begin{proof} Let $\varepsilon>0$ and $\ell \in \n$. 
Assume that one has constructed $H_\ell$ and $Z_{1:\ell}$, which 
are both functions of $U_{1:\ell}$ only, and such that
    \begin{itemize}
        \item $\P[H_\ell]>0$,
        \item $\mathcal{L}(Z_{1:\ell})=\mathcal{L}(X_{1:\ell})$,
        \item $H_\ell$ is independent of $Z_{1:\ell}$,
        \item $\P[Z_{1:\ell}\ne X_{1:\ell}~|~H_\ell] \le \varepsilon/3.$
    \end{itemize}
Lemmas~\ref{lemme_amorce_1} and~\ref{lemme_amorce_2}, applied to $\ell$, 
$H=H_\ell$, $Z=Z_{1:\ell}$ and $\varepsilon/3$ provide two real numbers 
$m \ge 1$, $n \ge 1$ and a $\sigma(Z_{1:\ell})$-measurable random variable $M$. 
Set 
$$f_\ell=f(\cdot|X^\passe_\ell),\quad f'_\ell=f(\cdot|Z_{1:\ell}),
\quad A=D_{m^{-1}f'_\ell},\quad B=D_M, \quad C=D_{f_\ell}.$$ 
The random sets $A$ and $B$ are $\sigma(Z_{1:\ell})$-measurable, 
therefore $\sigma(U_{1:\ell})$-measurable, whereas $C$ is 
$\F^X_{\ell}$-measurable. Moreover, by lemma~\ref{lemme_amorce_2},
$$\mu(A\cup B)=\int_E \sup(m^{-1}f(a|Z_{1:\ell}),M) \d\pi(a) = n+1,$$
whereas
$$\mu(A) = \int_E m^{-1}f(a|Z_{1:\ell}) \d\pi(a) = m^{-1}.$$
Let $$H' = \{t_A(U_{\ell+1})\le t_B(U_{\ell+1})\} = 
\{t_A(U_{\ell+1})\le t_{B \setminus A}(U_{\ell+1})\},$$
$$H_{\ell+1} = H_\ell \cap H', \quad Z_{\ell+1}=x_A(U_{\ell+1}),$$
and recall that $X_{\ell+1}=x_{f_\ell}(U_{\ell+1})=x_{C}(U_{\ell+1})$ 
by proposition~\ref{governing sequence}.
% \begin{eqnarray*}
% H'&=&\Big\{t_{m^{-1}f'_\ell}(U_{\ell+1})\le t_{M}(U_{\ell+1})\Big\}\\
% &=& \Big\{t_{\sup{(m^{-1}f'_\ell,M)}}(U_{\ell+1})=t_{m^{-1}f'_\ell}(U_{\ell+1})\Big\},\\
% & &\\
% Z_{\ell+1}&=&x_{m^{-1}f'_\ell}(U_{\ell+1}),\\
% & &\\
% H_{\ell+1}&=&H_\ell \cap H'.\\
% \end{eqnarray*}
% Set
% \begin{eqnarray*}
% A&=&D_{m^{-1}f'_\ell}=\big\{(x,y)\in E\times \R^+ : y\le m^{-1} f(x|Z_{1:\ell}) \big\}\\
% A_z&=&D_{m^{-1}f(\cdot|z)}\\
% B&=&D_M\\
% C&=&D_{f_\ell}.
% \end{eqnarray*}
% Then $Z_{\ell+1}=x_A(U_{\ell+1})$, $H'=\{t_A(U_{\ell+1})\le t_B(U_{\ell+1})\}$ and $X_{\ell+1}=x_{C}(U_{\ell+1})$ (see proposition~\ref{governing sequence}).

% \eject
% \newpage

% \begin{figure}[h]
% \begin{center}
% \includegraphics[width=.7\textwidth]{ABC.eps}
% \caption{$A$,$B$ and $C$.}
% \end{center}
% \label{ABC}
% \end{figure}

For every $F \in \mathcal{E}^{\otimes \ell}$ and $G \in \mathcal{E}$, 
\begin{equation}\label{conditionning1}
\P[Z_{1:\ell+1}\in F\times G~;~H_{\ell+1}~|~\F^{X,U}_{\ell}]
= {\bf 1}_F(Z_{1:\ell})~{\bf 1}_{H_\ell}~ 
\P[Z_{\ell+1}\in G~;~H'~|~\F^{X,U}_{\ell}].
\end{equation}
% But
% $$
% \{Z_{\ell+1}\in G~;~H'\} = 
% \{x_A(U_{\ell+1})\in G~;~t_A(U_{\ell+1}) \le t_{B \setminus A}(U_{\ell+1})\}.
% $$
Conditionally on $\F^{X,U}_{\ell}$, the random variables $x_A(U_{\ell+1})$, 
$t_A(U_{\ell+1})$ and $t_{B \setminus A}(U_{\ell+1})$ are independent, with 
respective laws $f'_\ell \cdot \pi$, exponential with parameter 
$\mu(A)$, exponential with parameter $\mu(B \setminus A)$. Therefore, 
\begin{equation}\label{conditionning2}
\P[Z_{\ell+1}\in G~;~H'~|~\F^{X,U}_{\ell}] 
= \frac{\mu(A)}{\mu(A \cup B)} \int_G f'_\ell \d\pi 
= \frac{1}{m(n+1)} \int_G f'_\ell \d\pi.
\end{equation}
In particular, $\P[H'|\F^{X,U}_{\ell}]=\frac{1}{m(n+1)}$, 
so $H'$ is independent of $\F^{X,U}_{\ell}$; this fact 
will be used at the end of the proof. 

Combining equalities~\ref{conditionning1} and~\ref{conditionning2}, 
taking expectations and using the induction hypothesis yield 
\begin{eqnarray*}
\P[Z_{1:\ell+1}\in F\times G~;~H_{\ell+1}] 
&=& \frac{1}{m(n+1)}
\E\Big[{\bf 1}_F(Z_{1:\ell})~{\bf 1}_{H_\ell}~\int_Gf(\cdot|Z_{1:\ell})\d\pi\Big]\\
&=& \frac{\P(H_\ell)}{m(n+1)} 
\E\Big[{\bf 1}_F(X_{1:\ell})~\int_Gf(\cdot|X_{1:\ell})\d\pi\Big]\\
&=& \frac{\P(H_\ell)}{m(n+1)} \P[X_{1:\ell+1}\in F\times G].
\end{eqnarray*}
This shows that $Z_{1:\ell+1}$ and $H_{\ell+1}$ are independent, 
that $\mathcal{L}(Z_{1:\ell+1})=\mathcal{L}(X_{1:\ell+1})$ and 
that $\P(H_{\ell+1})=\P(H_\ell)/(m(n+1))>0$.

What remains to be proved is the upper bound of 
$\P[Z_{\ell+1}\ne X_{\ell+1}~|~H_{\ell+1}]$.
By proposition~\ref{erreur} and lemma~\ref{lemme1},
\begin{eqnarray*}
\big\{X_{\ell+1}=Z_{\ell+1} \big\}\cap H' &\supset& \big\{ t_C(U_{\ell+1})=t_A(U_{\ell+1})=t_{A\cup B}(U_{\ell+1})\big\}\\
&=&\big\{t_{A\cap C \cap (A \cup B)}(U_{\ell+1})=t_{A\cup C \cup (A \cup B)}(U_{\ell+1}) \big\}\\
&=&\big\{t_{A\cap C}(U_{\ell+1})=t_{A\cup C \cup B}(U_{\ell+1})  \big\}.
\end{eqnarray*}
Since $A$, $B$ and $C$ are measurable for $\F^{X,U}_{\ell}$ and 
$U_{\ell+1}$ is independent from $\F^{X,U}_{\ell}$, one has
$$\P\Big[X_{\ell+1}=Z_{\ell+1};H'~\Big|~\F^{X,U}_{\ell}\Big]
\ge\frac{\mu(A\cap C)}{\mu(A\cup C \cup B)}.$$
Moreover
\begin{eqnarray*}
\mu(A\cap C)&=&\frac{1}{m}\int_E \min\big(f(a|Z_{1:\ell}), m  f(a|X^{\passe}_\ell)\big)\d\pi(a)\\
&=&\frac{1}{m}\Big(\int_E f(a|Z_{1:\ell})\d\pi(a)-\int_E\big[f(a|Z_{1:\ell})- m  f(a|X^\passe_\ell)\big]_+\d\pi(a)\Big)\\
&=&\frac{1}{m}\Big(1-\int_E\big[f(a|Z_{1:\ell})- m  f(a|X^\passe_\ell)\big]_+\d\pi(a)\Big)\\
\end{eqnarray*}
and
\begin{eqnarray*}
\mu(A\cup C \cup B)&=&\int_E \sup\big(m^{-1}f(a|Z_{1:\ell}), f(a|X^\passe_\ell),M\big)\d\pi(a)\\
&=&\int_E\sup(m^{-1}f(a|Z_{1:\ell}),M)\d\pi(a)\\
&\ &\quad\quad\quad+\int_E\big[f(a|X^\passe_\ell)-\sup(m^{-1}f(a|Z_{1:\ell}),M)\big]_+\d\pi(a)\\
&\le&n+1+\int_E\big[f(a|X^\passe_\ell)-M\big]_+\d\pi(a)\\
&\le&(n+1)\Big(1+\int_E\big[f(a|X^\passe_\ell)-M\big]_+\d\pi(a)\Big), 
\end{eqnarray*}
so
\begin{eqnarray*}
\mu(A\cup C \cup B)^{-1}&\ge&\frac{1}{n+1}\Big(1-\int_E\big[f(a|X^\passe_\ell)-M\big]_+\d\pi(a)\Big).
\end{eqnarray*}
Thus
\begin{eqnarray*}
m(n+1)\P\big[X_{\ell+1}=Z_{\ell+1};H'~\big|~\F^{X,U}_{\ell}\big]
&\ge&m(n+1)\frac{\mu(A\cap C)}{\mu(A\cup C \cup B)} \\
&\ge&1-\int_E\big[f(a|X^\passe_\ell)-M\big]_+\d\pi(a)\\
& &-\int_E\big[f(a|Z_{1:\ell})- m  f(a|X^\passe_\ell)\big]_+\d\pi(a).
\end{eqnarray*}
Since $H_\ell \in \F^{X,U}_{\ell}$, one has
\begin{eqnarray*}
m(n+1)\P[ X_{\ell+1}=Z_{\ell+1}\,;\,H' | H_{\ell}]
&\ge&1-\E\Big[\int_E\big[f(a|X^\passe_\ell)-M\big]_+\d\pi(a)\Big|H_{\ell}\Big]\\
&-& \E\Big[\int_E\big[f(a|Z_{1:\ell})- m  f(a|X^\passe_\ell)\big]_+\d\pi(a)\Big|H_{\ell}\Big]\\
&\ge&1-2\varepsilon/3,
\end{eqnarray*}
where the last inequality stands from lemmas~\ref{lemme_amorce_1} 
and~\ref{lemme_amorce_2}.
But
\begin{eqnarray*}
\P[X_{\ell+1}=Z_{\ell+1},H'|H_\ell]&=&\P[H'|H_\ell]\,\P[X_{\ell+1}=Z_{\ell+1}|H'\cap H_\ell]\\
&=&\frac{1}{m(n+1)}\P[X_{\ell+1}=Z_{\ell+1}| H_{\ell+1}].
\end{eqnarray*}
Hence,
$$\P[Z_{\ell+1}=X_{\ell+1}|H_{\ell+1}]\ge 1-2\varepsilon/3.$$
But, since $H_{\ell+1} = H_\ell \cap H'$ and since $H'$ is independent of 
$\F^{X,U}_{\ell}$, the induction hypothesis yields
$$\P[Z_{\ell}\ne X_{1:\ell} | H_{\ell+1}] 
= \P[Z_{\ell}\ne X_{1:\ell} | H_{\ell}] \le \varepsilon/3,$$
so
$$
\P[Z_{1:\ell+1}\ne X_{1:\ell+1} | H_{\ell+1}]\le\P[Z_{\ell}\ne X_{1:\ell} | H_{\ell+1}] 
+ \P[Z_{\ell+1}\ne X_{\ell+1} | H_{\ell+1}]\le\varepsilon,
$$
which ends the proof. \hfill $\square$

\end{proof}

\section{End of the proof of theorem~\ref{theo new}}\label{S_end of the proof}

In this section, we assume that the priming condition and condition $\mathcal{H}'(\eta)$ hold. We show that the parametrization $(U_n)_{n\in \z}$ given by proposition~\ref{governing sequence} is generating. By stationarity, it suffices to show that $X_0$ is a function of $(U_n)_{n\in \z}$ only. We proceed by successive approximations.

\subsection{Approximation until a given time}
\label{S2.3}

Choose $\varepsilon>0$ and $\ell\ge1$ such that $\sum_{n\ge \ell} \eta_n \le \varepsilon$, let $J=[s,t]$ be an interval of integers such that $t-s+1=\ell$. Note $X_J=X_{s:t}$.

By stationarity of $X$,
lemma~\ref{lemme amorce2} provides an event $H_J$ and a random variable $Z_J$, functions of $U_J$ only, such that
\begin{itemize}
        \item $\P\big[H_J]>0$,
        \item $\mathcal{L}(Z_J)=\mathcal{L}(X_J)=\mathcal{L}(X_{1:\ell})$,
        \item $H_J$ is independent of $Z_J$,
        \item $\P\big[X_{J}\ne Z_J~\big|~H_J\big] \le \varepsilon$.
\end{itemize}

Using $Z_J$ and the parametrization $(U_n)_{n \geq t+1}$, we consider the random variables $(X'_n)_{n \geq s}$ defined by $X'_{J}=Z_J$ and for every $n\ge t+1$,
$$X'_n=x_{f'_{n-1}}(U_n) \text{ where } f'_{n-1}= f(\cdot|X'_{s:n-1}).$$

Let us establish some properties of the process $X'$ thus defined.
\begin{lemme}\label{X tilde a meme loi que X}
For every $n \geq s$, the law of $X'_{s:n}$ is the law of $X_{s:n}$.
\end{lemme}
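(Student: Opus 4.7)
The plan is to proceed by induction on $n \ge s$. For the base cases $s \le n \le t$, the vector $X'_{s:n}$ is a marginal of $X'_J = Z_J$; the priming lemma (lemma~\ref{lemme amorce2}) combined with the stationarity of $X$ gives $\mathcal{L}(Z_J) = \mathcal{L}(X_{1:\ell}) = \mathcal{L}(X_{s:t})$, hence the corresponding marginals agree and $\mathcal{L}(X'_{s:n}) = \mathcal{L}(X_{s:n})$.

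For the inductive step, fix $n \ge t+1$ and assume $\mathcal{L}(X'_{s:n-1}) = \mathcal{L}(X_{s:n-1})$. The crucial observation is that $X'_{s:n-1}$ is a function of $(U_s, \ldots, U_{n-1})$ alone: $Z_J$ depends only on $U_J$, and for each $t+1 \le m \le n-1$, $X'_m$ is constructed from $X'_{s:m-1}$ and $U_m$. Since the $U_k$ are i.i.d., $U_n$ is independent of $X'_{s:n-1}$, and therefore also of the random density $f'_{n-1} = f(\cdot|X'_{s:n-1})$. Applying the first point of lemma~\ref{lemme-coupl-loi} conditionally on $X'_{s:n-1}$, the random variable $X'_n = x_{f'_{n-1}}(U_n)$ admits $f(\cdot|X'_{s:n-1})$ as conditional density with respect to $\pi$.

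On the $X$ side, stationarity gives, for every $x \in E^{n-s}$,
$$\mathcal{L}(X_n | X_{s:n-1} = x) = \mathcal{L}(X_0 | X_{-(n-s):-1} = x),$$
which by the standing assumption on conditional laws has density $f(\cdot|x)$. Therefore, for all measurable sets $A \subset E^{n-s}$ and $B \subset E$,
$$\P\big[X'_{s:n-1} \in A,\ X'_n \in B\big] = \int_A \Big(\int_B f(a|x)\d\pi(a)\Big)\,\d\mathcal{L}(X'_{s:n-1})(x),$$
and the analogous identity holds with $X$ in place of $X'$. By the induction hypothesis the two right-hand sides coincide, and since rectangles $A \times B$ generate the product $\sigma$-algebra on $E^{n-s+1}$, this yields $\mathcal{L}(X'_{s:n}) = \mathcal{L}(X_{s:n})$.

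No step looks technically hard. The only delicate point is the independence of $U_n$ from $X'_{s:n-1}$, which hinges on the causal, recursive definition of $X'$ and the i.i.d.\ property of the governing sequence established in proposition~\ref{governing sequence}. Once that is in hand, the conditional-density characterization of lemma~\ref{lemme-coupl-loi} makes the induction essentially mechanical.
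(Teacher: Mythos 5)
Your proof is correct and follows essentially the same route as the paper: condition on $X'_{s:n-1}$, use the independence of $U_n$ from $X'_{s:n-1}$ (hence from $f'_{n-1}$) together with the first point of lemma~\ref{lemme-coupl-loi} to identify the conditional density of $X'_n$ as $f(\cdot|X'_{s:n-1})$, match it with the conditional density of $X_n$ given $X_{s:n-1}$ by stationarity, and induct from the base case $\mathcal{L}(Z_J)=\mathcal{L}(X_J)$. You merely spell out the justification of the independence of $U_n$ from $X'_{s:n-1}$ more explicitly than the paper does, which is fine.
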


\begin{proof}
Choose $n\geq t+1$, ~$z \in E^{n-s}$ and $B\in \mathcal{E}$. By lemma~\ref{lemme-coupl-loi} and by independence of $U_n$ and $f'_{n-1}$, the random variable $X'_n$ admits the density $f'_{n-1}= f(\cdot|X'_{s:n-1})$ conditionally on $X'_{s:n-1}$. Hence
$$\P\big[X'_n\in B~\big|~X'_{s:n-1}=z\big]~=~\int_B f(x|z)\d\pi(x)~=~\P\big[X_n\in B~\big|~X_{s:n-1}=z\big].$$
Since  $X'_{J}=Z_J$ has the same law as $X_{J}$, the result follows by induction.
\hfill $\square$ \end{proof}

\begin{lemme}\label{lemme maj erreur partie 2} One has
$\P\big[X'\ne X\text{ {\rm on }}[s,+\infty[~\big|~H_J\big]\leq3\varepsilon.$
\end{lemme}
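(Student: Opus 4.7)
The plan is to decompose the event $\{X'\ne X \text{ on }[s,\infty)\}$ along the first time of disagreement, use the coupling estimate of Lemma~\ref{lemme-coupl-loi} at each single step, and then identify the resulting TV distance with $\eta_{n-s}$ by exploiting the independence structure supplied by the governing sequence. By a union bound,
\[
\P[X'\ne X \text{ on }[s,\infty)\mid H_J] \le \P[X'_J\ne X_J\mid H_J] + \sum_{n>t}\P\!\left[X'_n\ne X_n,\, X'_{s:n-1}=X_{s:n-1}\mid H_J\right];
\]
the first term is at most $\varepsilon$ by the fourth conclusion of the priming lemma applied to $Z_J=X'_J$.

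For each $n>t$, both $X_n=x_{f_{n-1}}(U_n)$ and $X'_n=x_{f'_{n-1}}(U_n)$ use the same Poisson process $U_n$, which by Proposition~\ref{governing sequence} is independent of $\F^{X,U}_{n-1}$. Lemma~\ref{lemme-coupl-loi} therefore gives $\P[X_n\ne X'_n\mid \F^{X,U}_{n-1}]\le 2\|f_{n-1}-f'_{n-1}\|$. On the event $\{X'_{s:n-1}=X_{s:n-1}\}$ the densities read $f'_{n-1}=f(\cdot\mid X'_{s:n-1})$ and $f_{n-1}=f(\cdot\mid X^\passe_{s-1}X'_{s:n-1})$; dropping the coincidence indicator in the upper bound gives
\[
\P[X'_n\ne X_n,\, X'_{s:n-1}=X_{s:n-1}\mid H_J] \le \frac{2}{\P[H_J]}\, \E\!\left[\mathbf{1}_{H_J}\,\|f(\cdot\mid X^\passe_{s-1}X'_{s:n-1})-f(\cdot\mid X'_{s:n-1})\|\right].
\]

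The crucial step is to identify this upper bound with $2\eta_{n-s}$. Iterating the governing property of Proposition~\ref{governing sequence} yields $X^\passe_{s-1}\perp U_{s:n-1}$, while both $H_J\in\sigma(U_J)$ and the recursion defining $X'_{t+1:n-1}$ from $Z_J$ and $U_{t+1:n-1}$ make $X'_{s:n-1}$ a deterministic function of $U_{s:n-1}$ alone. Hence $X^\passe_{s-1}$ is independent of $(X'_{s:n-1},H_J)$. Together with $\mathcal{L}(Z_J\mid H_J)=\mathcal{L}(Z_J)=\mathcal{L}(X_J)$ from the priming lemma, the independence of $U_{t+1:n-1}$ from $(U_J,Z_J)$, and Lemma~\ref{X tilde a meme loi que X}, this implies $\mathcal{L}(X'_{s:n-1}\mid H_J)=\mathcal{L}(X_{s:n-1})$. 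Thus under $\P[\,\cdot\mid H_J]$ the pair $(X^\passe_{s-1},X'_{s:n-1})$ is independent with the stationary marginals of $X^\passe_{-1}$ and $X_{s:n-1}$—precisely the joint law appearing in the definition of $\eta$ with the independent copy $Y$—so the conditional expectation equals $\eta_{n-s}$. Summing over $n>t$ with $n-s\ge\ell$ gives $\sum_{n>t}2\eta_{n-s}=2\sum_{m\ge\ell}\eta_m\le 2\varepsilon$, and altogether $\varepsilon+2\varepsilon=3\varepsilon$.

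The main obstacle is this last identification. The TV distance $\|f(\cdot\mid X^\passe_{n-1})-f(\cdot\mid X_{s:n-1})\|$ that naturally emerges from the coupling bound has a conditional expectation generally different from $\eta_{n-s}$, because $(X^\passe_{s-1},X_{s:n-1})$ is highly dependent under the stationary law of $X$. Substituting $X_{s:n-1}$ by $X'_{s:n-1}$ on the coincidence event (where the two are equal, hence harmlessly in an upper bound) is the sleight of hand that restores the required independence, since $X'_{s:n-1}$ is a pure function of $U_{s:n-1}$ and thereby independent of $X^\passe_{s-1}$.
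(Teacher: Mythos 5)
Your proof is correct and follows essentially the same route as the paper: the first-disagreement decomposition is the paper's induction in telescoped form, and the key steps (the $2\|f_{n-1}-f'_{n-1}\|$ coupling bound, replacing $X_{s:n-1}$ by $X'_{s:n-1}$ on the coincidence event before dropping the indicator, and the independence of $X^{\passe}_{s-1}$ from $(X'_{s:n-1},H_J)$ to identify the bound with $2\eta_{n-s}$) are exactly those of the paper. The "sleight of hand" you single out is indeed the crux of the paper's argument.
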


\begin{proof}
Let $n \ge t+1$. Since $X_n=x_{f_{n-1}}(U_n)$ and $X'_n=x_{f'_{n-1}}(U_n)$, proposition~\ref{lemme-coupl-loi} and the independence of $U_n$ and $\F^{X,U}_{n-1}$ yield,
$$\P\big[X'_n\ne X_n~\big|~ \F^{X,U}_{n-1}\big]\leq 2\|f'_{n-1}- f_{n-1}\|.$$
Let $$p_n=\P\big[X'_n\ne X_n~;~X'_{s:n-1}=X_{s:n-1}~;~H_J\big].$$
Since $\big\{X'_{s:n-1}=X_{s:n-1}~;~H_J\big\} \in \F^{X,U}_{n-1},$ one gets
\begin{eqnarray*}
p_n&\leq& \E\Big[2\|f_{n-1}- f'_{n-1}\|~  \mathbf{1}_{\{X'_{s:n-1}=X_{s:n-1}\}}~\mathbf{1}_{H_J}\Big]\\
&=&2\ \E\Big[\|f(\cdot|X^{\passe}_{s-1}X'_{s:n-1})- f(\cdot|X'_{s:n-1})\|~\mathbf{1}_{\{X'_{s:n-1}=X_{s:n-1}\}}~ \mathbf{1}_{H_J}\Big]\\
&\leq& 2\ \E\Big[\|f(\cdot|X^{\passe}_{s-1}X'_{s:n-1})- f(\cdot|X'_{s:n-1})\|~\mathbf{1}_{H_J}\Big].
\end{eqnarray*}

But $X^{\passe}_{s-1}$, $Z_J$, $H_J$ and $U_{t+1:n-1}$ are independent hence $X^{\passe}_{s-1}$, $X'_{s:n-1}$ and $H_J$ are independent since $X'_{s:n-1}$ is a function of $Z_J$ and $U_{t+1:n-1}$ only. Thus,
$$
p_n\leq 2\ \E\Big[\|f(\cdot|X^{\passe}_{s-1}X'_{s:n-1})- f(\cdot|X'_{s:n-1})\| \Big]~ \P[H_J]=2 \eta_{n-s}\P[H_J].
$$
Hence,
$$\P\big[X'_n\ne X_n~;~X'_{s:n-1}=X_{s:n-1}~\big|~H_J\big]\le 2\eta_{n-s},$$
therefore,
$$\P\big[X'_{s:n}\ne X_{s:n}~|~H_J\big]\le \P\big[X'_{s:n-1}\ne X_{s:n-1}~|~H_J\big] + 2\eta_{n-s}.$$
By induction, one gets for all $n \ge t+1$
$$\P\big[X'_{s:n}\ne X_{s:n}~\big|~H_J\big]\le \P\big[X'_{J}\ne X_{J}~\big|~H_J\big]+2\displaystyle\sum_{m=\ell}^{n-s}\eta_m.
$$
Since $X'_{J}=Z_J$ and $\P[X_J\ne Z_J ~|~H_J] \le \varepsilon$, this yields
$$
\P\big[X'\ne X\text{ on }[s,+\infty[~\big|~H_J\big]\le \varepsilon+2\sum_{m=\ell}^{\infty}\eta_m \le 3\varepsilon,$$
which ends the proof. \hfill $\square$ \end{proof}

\subsection{Successive approximations}
\label{S2.4}

Our next step in the proof of theorem~\ref{theo new} is to approach 
the random variable $X_0$ by measurable functions of the 
parametrization $(U_n)_{n\in\z}$ of proposition~\ref{governing sequence}. 
To this aim, we group the innovations by intervals of times.
For every $m \ge 1$ one chooses an integer $L_m$ such that 
$$\sum_{n\geq L_m} \eta_n \leq 1/m.$$
Lemma~\ref{lemme amorce2} (the priming lemma) applied to $\ell=L_m$ and $\varepsilon=1/m$ provides an event $H_{L_m}$ of positive probability $\alpha_{m}$ and a random variable $Z_{L_m}$ such that $$\P \big[X_{1:L_m}=Z_{L_m}~|~H_{L_m}\big]\geq 1-1/m.$$
Choose an integer $M_m\geq 1/\alpha_{m}$.
Split $\z^{*}_-$ into $M_1$ intervals of length $L_1$, $M_2$ intervals of length $L_2$, and so on. More precisely set, for every $n\geq 1$, $\ell_n=L_{m(n)}$, $\varepsilon_n=1/m(n)$ and $\alpha_{n}=\alpha_{m(n)}$ where $m(n)$ is the only integer such that $$M_1+\cdots+M_{m(n)-1}<n\leq M_1+\cdots+M_{m(n)}.$$
Therefore, for every $k\geq0$, 
$$\sum_{n\geq \ell_k}\eta_n \leq \varepsilon_k.$$
For every $k\geq0$, set
$$t_k=-\sum_{1\leq n\leq k}\ell_n,
J_k=[t_k,t_k+\ell_k-1]=[t_k,t_{k-1}-1] \text{ and } X_{J_k}=X_{t_k:t_{k-1}-1}.$$
%that is to say $t_0=0$ and $t_k=t_{k-1}-\ell_{k}$ for $k \geq 1$
%Define, for $k \geq 0$,  the interval of integers
%$$J_k=[t_k,t_k+\ell_k-1]=[t_k,t_{k-1}-1] \text{ and } 
%X_{J_k}=X_{t_k:t_{k-1}-1}.$$

%\eject
%\newpage

\begin{figure}[h]
\begin{center}
\includegraphics[width=1\textwidth]{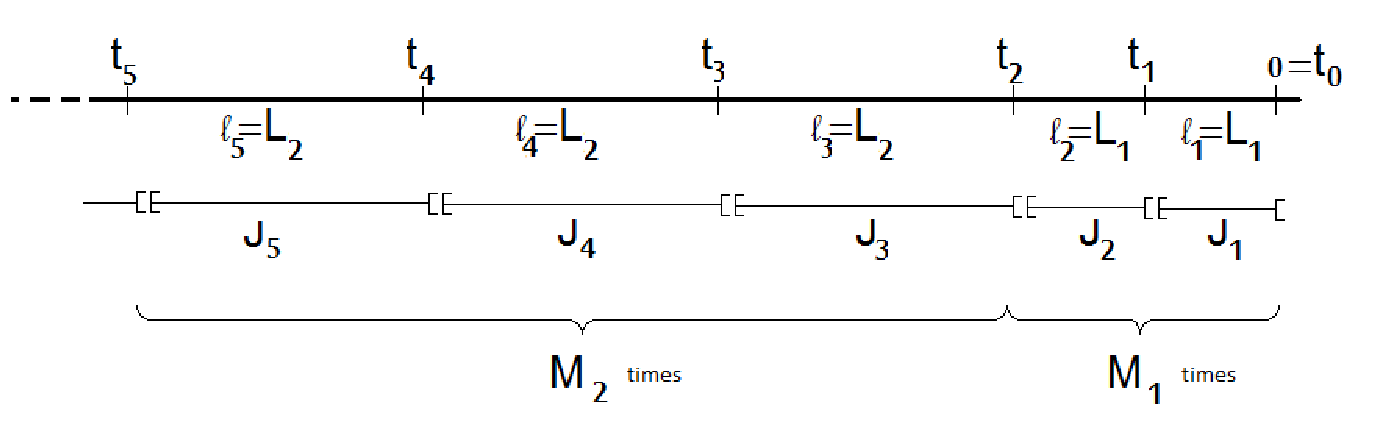}
\caption{Splitting $\z^*_-$ into intervals of times.}
\end{center}
\label{intervals of time}
\end{figure}

Lemma~\ref{lemme amorce2} applied to $(\varepsilon_k)_{k\ge 1}$ and $(U_{J_k})_{k\ge1}$ provides events $(H_{J_k})_{k\ge1}$ and random variables $(Z_{J_k})_{k\ge1}$.
For every $k\geq 0$, let us use the construction of section~\ref{S2.3}: set $X^k_{J_k}=Z_{J_k}$, then for every $n\geq t_k+\ell_k=t_{k-1}$
$$X^k_n=x_{f^k_{n-1}}(U_n) \text{ where } f^k_{n-1}= f(\cdot|X^k_{t_k:n-1}).$$
Therefore, lemma~\ref{lemme maj erreur partie 2} yields the inequality
$$\P[X_0\ne X^k_0~|~H_{J_k}]\le \P\big[X_{t_k:0}\ne X^k_{t_k:0}~\big|~H_{J_k}\big]
\le 3\varepsilon_k,$$
which shows that 
$$\P\big[X_0^k \ne X_0 ~\big|~ H_{J_k}\big] \to 0\text{ when } k \to +\infty.$$
Moreover, the events $H_{J_k}$ are independent as functions of random variables $U_j$ for disjoint sets of indices $j$ and
$$\sum_{k\geq 1} \P\big[H_{J_k} \big]=\sum_{n\geq 1} \alpha_{\ell_{m(n)}} =\sum_{m=1}^{+\infty}M_m \alpha_{m} =+\infty$$
since $M_m \alpha_{m}\ge 1$  by choice of $M_m$.

Lemma~\ref{lem2.1}, stated below, provides a deterministic increasing function $\theta$ such that
$$\sum_{k\geq 1} \P\big[X_0^{\theta(k)}\ne X_0 ~;~ H_{J_{\theta(k)}} \big] < +\infty$$
$$\text{and } \sum_{k\geq 1} \P\big[ H_{J_{\theta(k)}}\big] = +\infty.$$
Using Borel-Cantelli's lemma, one deduces that
\begin{itemize}
\item $\big\{X_0^{\theta(k)}\ne X_0\big\} \cap  H_{J_{\theta(k)}} 
\text{ occurs only for finitely many }k\text{ only, a.s.}$
\item $ H_{J_{\theta(k)}}  \text{ occurs for infinitely many } k \text{ a.s.}$
\end{itemize}
Thus, for every $B \in \mathcal{E}$,
$$\{X_0\in B\} = \limsup_{k \to \infty} 
H_{J_{\theta(k)}} \cap \big\{X_0^{\theta(k)}\in B\big\},$$
hence $\big\{x_0\in B\big\} \in \F^U_0$.
By stationarity of the process $(X,U)$, one gets the inclusion 
of the filtration $\F^X$ into the filtration $\F^U$, which ends the proof. 
$\hfill \square~$

\begin{lemme}\label{lem2.1}
Let $(a_n)_{n\ge0}$ and $(b_n)_{n\ge0}$ denote two bounded sequences of 
nonnegative real numbers such that the series $\sum_nb_n$ diverges 
and such that $a_n \ll b_n$. Then there exists an increasing function 
$\theta : \n \to \n$ such that the series
$\sum_{n} a_{\theta(n)}$ converges and the series
$\sum_{n} b_{\theta(n)}$ diverges.
\end{lemme}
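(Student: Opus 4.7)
\emph{Proof proposal for Lemma~\ref{lem2.1}.}

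The plan is to exploit $a_n = o(b_n)$ by thinning the index set block by block. First, I would pick a sequence of thresholds $T_1 < T_2 < \cdots$ such that $a_n \le 2^{-k} b_n$ for every $n \ge T_k$, letting $M \ge 1$ be a common upper bound for $(a_n)$ and $(b_n)$. Setting $I_k := [T_k, T_{k+1})$ and $S_k := \sum_{n \in I_k} b_n$, one has $\sum_k S_k = \sum_{n \ge T_1} b_n = +\infty$ while each $S_k$ is finite.

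Inside each block $I_k$, I would then greedily select a subset $J_k$: if $S_k \le 1$ take $J_k = I_k$, otherwise add indices of $I_k$ in increasing order and stop as soon as the partial $b$-sum first exceeds $1$. Writing $c_k := \sum_{n \in J_k} b_n$, the bound $b_n \le M$ gives $c_k \le 1 + M$ uniformly in $k$, while clearly $c_k \ge \min(S_k, 1)$. Let $\theta : \N \to \N$ enumerate $\bigcup_k J_k$ in increasing order.

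For the convergence of $\sum_n a_{\theta(n)}$, every $n \in J_k$ lies in $I_k$, hence $n \ge T_k$ and $a_n \le 2^{-k} b_n$; summing yields
\[
\sum_n a_{\theta(n)} = \sum_k \sum_{n \in J_k} a_n \le \sum_k 2^{-k} c_k \le (1+M) \sum_k 2^{-k} < +\infty.
\]
For the divergence of $\sum_n b_{\theta(n)}$, I would use $\sum_n b_{\theta(n)} = \sum_k c_k \ge \sum_k \min(S_k, 1)$ and observe that if $\sum_k \min(S_k, 1)$ were finite then only finitely many $k$ could satisfy $S_k \ge 1$, while $\sum_{k \,:\, S_k < 1} S_k$ would also be finite, contradicting $\sum_k S_k = +\infty$.

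The main obstacle is the tension between capping each block's $b$-contribution uniformly in $k$ (needed to absorb the factor $2^{-k}$ so that $\sum a_{\theta(n)}$ converges) and still selecting enough indices from each block to keep $\sum b_{\theta(n)}$ divergent. The boundedness of $(b_n)$ is exactly what lets the greedy cap-at-$1$ construction succeed, since each block's selection overshoots $1$ by at most $M$; the elementary fact that $\sum_k S_k = +\infty$ forces $\sum_k \min(S_k, 1) = +\infty$ then closes the argument.
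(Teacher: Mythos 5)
Your proof is correct and complete. Note that the paper itself gives no argument for Lemma~\ref{lem2.1} --- it only defers to an external reference --- so there is nothing in the text to compare against; your block decomposition with thresholds $T_k$, the greedy cap-at-$1$ selection inside each block (where boundedness of $(b_n)$ guarantees $c_k\le 1+M$), and the elementary observation that $\sum_k S_k=+\infty$ forces $\sum_k\min(S_k,1)=+\infty$ together form a valid self-contained proof, and this is the standard way such extraction lemmas are established.
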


A proof of this lemma can be found in~\cite{Ceillier-splitwords}.

\section{proof of proposition~\ref{global-countable}}\label{S_aux}

Assume that $E$ is countable. Let $\varepsilon=(\varepsilon_a)_{a\in E}$ be 
an i.i.d. family of exponential random variables with parameter 1.

Let us show that, for any probability $p$ on $E$, there exists almost 
surely one and only one $a\in E$ such that
$$\frac{\varepsilon_a}{p(a)} = \inf_{b \in E} \frac{\varepsilon_b}{p(b)} 
\quad (\star),$$
with the convention that $\varepsilon_b/p(b)=+\infty$ if $p(b)=0$.
For every positive real number $r$,
$$\sum_{b \in E}\P[\varepsilon_b/p(b)\le r]
= \sum_{b \in E}\big(1-e^{-p(b)r}\big)\le \sum_{b\in E} p(b)r =r < +\infty,$$
hence Borel-Cantelli's lemma ensures that the event 
$\{\varepsilon_b/p(b)\le r\}$ occurs only for finitely many $b \in E$.
Thus the infimum in $(\star)$ is achieved at some $a \in E$. 
The uniqueness follows from the equalities 
$\P[\varepsilon_a/p(a)=\varepsilon_b/p(b)< +\infty]=0$ for every $a \ne b$, 
since $\varepsilon_a$ and $\varepsilon_b$ are independent random 
variables with diffuse laws.

Define $g(\varepsilon,p)$ (almost surely) as the only index $a$ such  
$(\star)$ holds. Let us show that, for every $p,q \in  \mathcal{P}(E)$,
$$
\P[g(\varepsilon,p) \ne g(\varepsilon,q)] \leq 2\frac{\|p-q\|}{1+\|p-q\|}.
$$
For every $a,b \in E$, set 
$C_a= \{g(\varepsilon,p) = g(\varepsilon,q) = a\}$ and
$$\lambda_{b/a} = \max \Big(\frac{p(b)}{p(a)},\frac{q(b)}{q(a)}\Big).$$
Fix $a\in E$. Then up to negligible events,
$$\{g(\varepsilon,p)=a\} 
= \bigcap_{b\ne a}
\Big\{\frac{\varepsilon_a}{p(a)}\le \frac{\varepsilon_b}{p(b)}\Big\}
= \bigcap_{b\ne a}
\Big\{\varepsilon_b \ge \frac{p(b)}{p(a)} \varepsilon_a \Big\},$$
and the same holds for $q$. Thus,
\begin{eqnarray*}
C_a = \bigcap_{b\ne a}\left\{ \varepsilon_b \ge\lambda_{b/a}\varepsilon_a\right\},
\end{eqnarray*}
Conditioning on $\varepsilon_a$ and using the fact that the random 
variables $(\varepsilon_b)$ are i.i.d. and exponentially distributed, 
one gets
$$
\P\big[C_a\,\big|\,\varepsilon_a\big]
=
\prod_{b \ne a} \exp\left(- \lambda_{b/a} \varepsilon_a\right),
$$
hence
\begin{eqnarray}\label{C_a}
\P(C_a)
= \E\Big[ \exp \Big( -\sum_{b \ne a} \lambda_{b/a} \varepsilon_a \Big) \Big]
= \Big( 1 + \sum_{b \ne a} \lambda_{b/a} \Big)^{-1}
= \Big( \sum_{b} \lambda_{b/a} \Big)^{-1}.
\end{eqnarray}
But $\lambda_{b/a}\le \max(p(b),q(b))/\min(p(a),q(a))$, hence
\begin{eqnarray*}
\P(C_a) \ge \frac{\min(p(a),q(a))}{\sum_{b} \max(p(b),q(b))}
= \frac{\min(p(a),q(a))}{1+\|p-q\|}.
\end{eqnarray*}
Therefore
\begin{eqnarray*}
\P[g(\varepsilon,p) = g(\varepsilon,q)]
=\sum_{a \in E}\P(C_a)
%&\ge&\sum_{a\in E} \frac{\min(p(a),q(a))}{1+\|p-q\|}\\
=\frac{1-\|p-q\|}{1+\|p-q\|}.
\end{eqnarray*}
Last, note that if $p=q$, then for each $a \in E$, equation~(\ref{C_a}) becomes $$\P[g(\varepsilon,p)=a]=\P(C_a)=\Big(\sum_b\frac{p(b)}{p(a)}\Big)^{-1}=p(a),$$ which shows that
the law of $g(\varepsilon,p)$ is $p$. The proof is complete.
\hfill $\square$
% \bibliographystyle{plain}

% \bibliography{bibliFr}

\end{document}